\definecolor{bulgarianrose}{rgb}{0.28, 0.02, 0.03}
\newtheorem{theorem}{Theorem}[section]
\newtheorem{corollary}[theorem]{Corollary}
\newtheorem{lemma}[theorem]{Lemma}
\theoremstyle{definition}
\newtheorem{construction}[theorem]{Construction}
\newtheorem*{remark}{Remark}
\newtheorem*{acknowledgement}{Acknowledgement}
\newcommand{\Ex}{\ensuremath{{\textnormal{E}}}}
\def\imod#1{\allowbreak\mkern10mu({\operator@font mod}\,\,#1)}
\def\@textbottom{\vskip\z@\@plus 18pt}
\let\@texttop\relax
\title[Ramsey number involving  bipartite graphs and odd wheel]{An exact Ramsey number of \\ 
large bipartite graphs versus odd wheel}
\author{Sayan Gupta}
\author{Kaushik Majumder}
\address{\newline 
\newline (a) School of Mathematical Sciences\\ \newline National Institute of Science Education and Research (NISER) Bhubaneswar\\\newline Jatni, Khurda-$752050$, Odisha, India.
\newline (b) Homi Bhabha National Institute (HBNI)\\ \newline Training School Complex, Anushakti Nagar, Mumbai- $400094$, India.
\newline \textnormal{\textestimated-Mails}: (Sayan Gupta) {\tt sayan.gupta\MVAt niser.ac.in, sayangupta4u\MVAt gmail.com}
\newline \textnormal{\textestimated-Mails}: (Kaushik Majumder) {\tt kaushikmajumder\MVAt niser.ac.in, kaushikbnmajumder\MVAt gmail.com}}
\subjclass[2020]{Primary:  05C55, 05D10, 05D40. Secondary: 05C35.}
\keywords{Ramsey Numbers, Ramsey Goodness, Wheel, $2-$connectedness}
\begin{document}
\thispagestyle{empty}
\vspace{10cm}

\begin{abstract}
The Ramsey number for the pair of graphs $\mathbb{K}_{1,n}$ (star) versus $W_{m}$ (wheel) has been extensively studied. In contrast, the Ramsey number of $\mathbb{K}_{2,n}$ versus the wheel is not yet explored due to the bit more structural complexity of $\mathbb{K}_{2,n}$ compared to the star. In this article, we have established an exact value of $\mathbb{K}_{2,n}$ versus $W_{m}$ for large $n$ and $m$. In particular, we have proved 
\begin{equation*}
R(\mathbb{K}_{2,n}, W_{m})=3n+4,
\end{equation*}
whenever $n$ and $m$ are sufficiently large integers satisfying $n\geq4m$ and $m$ is an odd integer. This proves the $W_{m}$-goodness of $\mathbb{K}_{2,n}$. Our proof combines probabilistic methods with an analysis of structural dependencies. As part of the argument, we resolve a structural rigidity question concerning highly dependent neighbourhoods (Lemma~\ref{nbd conservation 2}). 
\end{abstract}

\maketitle

\section{Introduction}

Let $G$ and $H$ be two graphs. The \emph{Ramsey number} $R(G, H)$ is the minimum positive integer $N$ such that for each graph $\Gamma$ on $N$ vertices either $\Gamma$ contains a copy of $G$ or its complement $\overline{\Gamma}$ contains a copy of $H$ as a subgraph. For a vertex $v$ in $G$, we denote the set of neighbours of $v$ in $G$ as $N_{G}(v)$ and its size $|N_{G}(v)|$ is called degree with textbook notation $\deg_{G}(v)$. In the case of a bipartite graph $G$ with bipartition $A\sqcup B$, if $v\in A$, then we simply denote the neighbours of it by $N_{B}(v)$ as it does not contain any neighbour in $A$. We denote the maximum and minimum degrees of a graph $G$ as $\Delta(G)$ and $\delta(G)$, respectively.
For a graph $G$, suppose its vertex set $V$ is decomposed into two sets $P\sqcup Q$, such that the ratio $\frac{|P|}{|V|}$ is sufficiently small enough, in the sense that such ratio $\frac{|P|}{|V|}$ tends to zero, whenever $|V|$ tends to $\infty$. In this setting, we may regard $Q$ as a \emph{dense} or \emph{regular} set in $V$, while $P$ behaves like a \emph{null} or \emph{irregular} set. The presence of a null set $P$ complicates the analysis of the dependency tree. However, the dominant contribution comes from the dense set $Q$, which plays a central role in analysis. To address this, we consider the induced subgraph by the dense set $Q$, denoted by the textbook notation $G[Q]$. The foundational local dependency relation acts as the primary anchor for our analysis. For each $x\in Q$, the neighbourhood $N_{G}(x)$ is bounded by  
\begin{equation}\label{nbd dependency}
N_{G[Q]}(x)\subset N_{G}(x)\subset N_{G[Q]}(x)\sqcup P.
\end{equation}
This inclusion facilitates a more transparent examination of the structural properties within the relevant subgraphs. The \emph{dependent random choice technique} is built precisely to manage such local dependency structures. The term \emph{dependent} refers to the fact that the choice of an arbitrarily chosen neighbourhood of a vertex is not independent across vertices. The neighbourhoods of different vertices overlap in highly structured ways. By \emph{choosing} vertices according to a \emph{random} experiment (with or without replacement) repeatedly, one tends to suppress the influence of: (a) vertices of very low degree (since they are rarely hit), and (b) vertices of very high degree (since they contribute disproportionately little to intersection patterns). These two observations trigger a natural decomposition of the vertex set $V=P\sqcup Q$. In this framework, the set $P$ represents the \emph{irregular} or \emph{null} component—the extreme outliers that can be safely ignored. The analysis then examines $Q$, where the structural properties are determined by the behaviour of the mediocre degree. This `cleaning' effect serves as the core mechanism of dependent random choice and is further facilitated by the inclusion of local dependencies. We implement this specific random experiment within the proof of Lemma~\ref{intersection lemma}.

Burr proved a general lower bound on the Ramsey number for the pair of graphs $(G,H)$. His statement is the following, which reveals the inequality relation between $R(G,H)$ and the parameters $|V(G)|$, $\chi(H)$ and $\sigma(H)$. Here, the parameter $\chi(H)$ denotes the textbook notation of the chromatic number of $H$ and $\sigma(H)$ denotes the size of the smallest colour class in a $\chi(H)$-colouring of $H$.
\begin{theorem}\cite[Theorem~1]{MR635872}
Let $G$ be a connected graph and $H$ be a graph satisfying $|V(G)|\geq \sigma(H)$, then 
 \begin{equation}\label{Burr inequality}
R(G,H)\geq (|V(G)|-1)(\chi(H)-1)+\sigma(H). 
\end{equation}
\end{theorem}

\noindent We say that a graph $G$ is \emph{$H$-good} if equality holds in Burr’s lower bound \eqref{Burr inequality} for the pair $(G,H)$. This notion of goodness, introduced by Burr and Erd\H{o}s in \cite{MR693019}. Generally speaking, it identifies the graph pairs $(G,H)$ for which Burr’s standard lower bound is, in fact, tight. The study of Ramsey goodness has attracted significant attention, particularly in cases where $H$ attains a bounded chromatic number independent of the graph order. Determining which graphs are $H-$good for various families of graphs $H$, and understanding the structural reasons behind goodness, remains one of the central and challenging problems in graph Ramsey theory. Over the past several decades, substantial progress has been made for trees, cycles, and various sparse or nearly bipartite graphs. For some interesting results on the Ramsey goodness, one can refer to \cite{MR3071850,MR4624632,MR3575209,MR4142765,MR2520282}.
  
A well-studied example in the literature on Ramsey goodness is the pair $(\mathbb{K}_{1,n}, W_{m})$, where $\mathbb{K}_{1,n}$ denotes the star and $W_{m}$ is the wheel graph on $m+1$ vertices. The formal definition of the \emph{wheel} $W_{m}$ on $m+1$ vertices is the graph $\mathbb{K}_{1} \circledast C_{m}$. Here, the notation $\circledast$ indicates the join of two graphs, that is, $\mathbb{K}_{1}$ is adjacent to all vertices of the cycle $C_{m}$. Recall that for each integer $m$, $\chi(W_{m})\in\{3,4\}$ and $\sigma(W_{m})=1$. The general Ramsey number for the pair $(\mathbb{K}_{1,n}, W_{m})$ in the case of odd wheels was first determined in \cite{MR2083456} for $n \geq m-1$, establishing that $\mathbb{K}_{1,n}$ is $W_{m}$-good in this range. This bound was later improved to $n \geq \frac{m+1}{2}$ in \cite{MR2186318}. For further results on star–wheel Ramsey numbers, see \cite{MR3461979}.

\begin{theorem}\cite[Theorem~2]{MR2083456}
If $n\geq m-1$ and $m\geq 3$ is an odd integer, then 
\begin{equation*}
R(\mathbb{K}_{1,n},W_{m})=3n+1.
\end{equation*}
\end{theorem}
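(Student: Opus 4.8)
The plan is to prove matching lower and upper bounds, with the lower bound essentially free and all the work concentrated in the upper bound. For the \textbf{lower bound} I would invoke Burr's inequality (the first theorem above) directly. Since $m$ is odd, $C_m$ has chromatic number $3$, and adjoining the hub forces $\chi(W_m)=4$; moreover the hub can be placed alone in its own colour class, so $\sigma(W_m)=1$. Taking $G=\mathbb{K}_{1,n}$, which is connected with $|V(G)|=n+1\geq 1=\sigma(W_m)$, Burr's bound yields $R(\mathbb{K}_{1,n},W_m)\geq (n+1-1)(4-1)+1=3n+1$. Hence only the upper bound remains.

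For the \textbf{upper bound} I would argue by complementation. Let $\Gamma$ be any graph on $N=3n+1$ vertices containing no copy of $\mathbb{K}_{1,n}$; since a copy of $\mathbb{K}_{1,n}$ is exactly a vertex of degree at least $n$, this means $\Delta(\Gamma)\leq n-1$. Passing to $G:=\overline{\Gamma}$, every vertex satisfies $\deg_G(u)=(N-1)-\deg_\Gamma(u)\geq 3n-(n-1)=2n+1$, so $\delta(G)\geq 2n+1$. It therefore suffices to show that every graph $G$ on $3n+1$ vertices with $\delta(G)\geq 2n+1$ contains $W_m$.

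The central step is a \textbf{neighbourhood pancyclicity argument}. Fix any vertex $v$ and set $H:=G[N_G(v)]$, writing $d:=|N_G(v)|$, so that $2n+1\leq d\leq 3n$. For $u\in N_G(v)$, the neighbours of $u$ lying outside $N_G(v)\cup\{v\}$ number at most $N-1-d=3n-d$, whence $\deg_H(u)\geq \deg_G(u)-1-(3n-d)\geq (2n+1)-1-3n+d=d-n$. Because $d\geq 2n+1>2n$, this gives $\delta(H)\geq d-n>d/2$. Now I would apply Bondy's pancyclicity theorem in its Dirac-condition form: a graph on $d\geq 3$ vertices with minimum degree at least $d/2$ is either pancyclic or equal to $K_{d/2,d/2}$; the exceptional graph has minimum degree exactly $d/2$, which the strict inequality $\delta(H)>d/2$ rules out. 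Thus $H$ is pancyclic and contains cycles of every length from $3$ to $d$. Since $n\geq m-1$ forces $m\leq 2m-1\leq 2n+1\leq d$, the graph $H$ contains a copy of $C_m$, and as $v$ is adjacent to every vertex of this cycle, $v$ together with the $C_m$ forms the required $W_m$.

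The main obstacle is the neighbourhood degree bookkeeping: one must confirm that after restricting to $N_G(v)$ each vertex still retains strictly more than half of the remaining vertices as neighbours. This is exactly where the hypothesis $\delta(G)\geq 2n+1$ (hence $d\geq 2n+1$) is decisive, as it pushes $\delta(H)$ strictly above $d/2$ and so forbids the $K_{d/2,d/2}$ exception in the pancyclicity dichotomy. Once that strict inequality is in hand the remaining steps are routine, and it is worth noting that the oddness of $m$ plays no role in the upper bound—it enters only through $\chi(W_m)=4$ in the lower bound—while the range condition $n\geq m-1$ serves precisely to guarantee $m\leq d$, so that the pancyclic conclusion genuinely produces a cycle of the desired length $m$.
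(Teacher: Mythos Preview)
Your argument is correct. The paper itself does not prove this theorem; it is quoted as a known result from Chen--Zhang--Zhang \cite{chen2004ramsey}, so there is no proof in the paper to compare against.

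On the mathematics: the lower bound via Burr's inequality is exactly right, and the upper bound is clean. The key computation $\delta(H)\geq d-n>d/2$ is sound because $d\geq 2n+1$ gives the strict inequality needed to exclude the balanced bipartite exception in Bondy's dichotomy, and the range check $m\leq 2n+1\leq d$ follows from $n\geq m-1$. One small cosmetic point: your chain ``$m\leq 2m-1\leq 2n+1$'' is fine but the intermediate $2m-1$ is unnecessary; $n\geq m-1$ already gives $m\leq n+1\leq 2n+1\leq d$ directly. Your remark that the parity of $m$ is used only in the lower bound (to force $\chi(W_m)=4$) is also correct and worth keeping.
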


A \emph{book} graph is the defined as $\mathbb{K}_{2}\circledast \overline{\mathbb{K}_{n}}$, i.e., $n$ triangles sharing a common edge. It is denoted by $B_{n}$. An exact value of the book-wheel Ramsey number is the following.
\begin{theorem}\cite[Theorem~1]{MR1781297}\label{zhou}
If $m\geq 5n+3$ and $n\geq 1$, then
\begin{equation*}
 R(W_{m},B_{n})= 2m+1.
\end{equation*}
\end{theorem}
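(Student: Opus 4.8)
The plan is to prove the two inequalities separately. For the lower bound $R(W_m,B_n)\ge 2m+1$ I would exhibit a witness on $2m$ vertices: take $\Gamma_0=\mathbb{K}_m\sqcup\mathbb{K}_m$, a disjoint union of two cliques. Since $W_m$ has $m+1$ vertices and is connected, while each component of $\Gamma_0$ has only $m$ vertices, $\Gamma_0$ contains no $W_m$; and $\overline{\Gamma_0}=\mathbb{K}_{m,m}$ is triangle-free, so it contains no $B_n$ (it does not even contain $B_1=\mathbb{K}_3$). Hence no graph on $2m$ vertices forces the pair, which gives the bound.

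For the upper bound I would take an arbitrary $\Gamma$ on $N=2m+1$ vertices whose complement $H=\overline{\Gamma}$ is $B_n$-free, and produce a copy of $W_m$ in $\Gamma$. The $B_n$-freeness yields two usable facts. First, for every pair $u,v$ non-adjacent in $\Gamma$ their common $\Gamma$-non-neighbours number fewer than $n$, so $|N_{\Gamma}(u)\cup N_{\Gamma}(v)|\ge 2m-n$ and in particular $\deg_{\Gamma}(u)+\deg_{\Gamma}(v)\ge 2m-n$. Second, for every vertex $w$ the set of its $\Gamma$-non-neighbours induces in $H$ a graph of maximum degree at most $n-1$, so in $\Gamma$ this set is a \emph{near-clique}, meaning each of its vertices misses fewer than $n$ of the others. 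The overall strategy is then to locate a large near-clique, or a large highly connected neighbourhood, pick a hub inside it, and extract a cycle $C_m$ among the hub's neighbours via Dirac's theorem together with Bondy's pancyclicity theorem (the latter applies because the relevant subgraphs have minimum degree strictly above half their order and so are far from bipartite).

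I would then split on $\delta(\Gamma)$. If $\delta(\Gamma)\le m-n$, the $\Gamma$-non-neighbourhood $M$ of a minimum-degree vertex satisfies $|M|=2m-\delta(\Gamma)\ge m+n$ and is a near-clique; any hub $h'\in M$ is adjacent to a set $R\subseteq M$ of at least $|M|-n\ge m$ vertices, and $\Gamma[R]$ is a near-clique of minimum degree at least $|R|-n>|R|/2$, hence pancyclic, so it contains $C_m$ and we obtain $W_m$. If $\delta(\Gamma)\ge m-n+1$ and moreover $\Delta(\Gamma)\ge m+n+1$, I would take a maximum-degree hub $h$, restrict the degree-sum bound to $A=N_{\Gamma}(h)$ to get $\deg_{\Gamma[A]}(x)+\deg_{\Gamma[A]}(y)\ge |A|-n-1\ge m$ for all non-adjacent $x,y\in A$, and invoke an Erd\H{o}s--Gallai/Bondy circumference bound followed by a pancyclicity argument to find $C_m$ inside $A$.

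The hard part is the remaining tight regime, where all degrees lie in $[m-n+1,\,m+n]$, so $\Gamma$ is nearly $m$-regular and $H$ is a $B_n$-free graph on $2m+1$ vertices of minimum degree about $|V(H)|/2$. Here a single neighbourhood only guarantees circumference roughly $m-n$, short of $m$, so I expect to need a book-free stability result of Andr\'asfai--Erd\H{o}s--S\'os type, asserting that such an $H$ lies within $O(n)$ edges of a complete bipartite graph; equivalently $\Gamma$ lies within $O(n)$ edges of a disjoint union of two cliques of sizes $m$ and $m+1$. The clique of size $m+1$, missing only $O(n)$ edges in total, then contains $W_m$: a hub incident to none of the missing edges sees all other $m$ vertices, which induce $\mathbb{K}_m$ minus $O(n)$ edges and hence a Hamiltonian $C_m$ by Dirac's theorem. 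Establishing this stability statement, and tracking the error terms carefully enough both to recover an exact $C_m$ (rather than merely a long cycle) and to pin down the precise hypothesis $m\ge 5n+3$, is where I expect the principal difficulty to lie.
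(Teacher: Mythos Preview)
The paper does not prove this theorem: it is quoted verbatim from \cite{zhou2000book} as background, with no argument supplied. There is therefore no ``paper's own proof'' to compare your proposal against.

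On its own terms, your lower-bound construction is fine. For the upper bound, the first two cases (small $\delta(\Gamma)$; large $\Delta(\Gamma)$) are plausible, but you yourself flag the real issue: in the nearly $m$-regular regime you invoke an Andr\'asfai--Erd\H{o}s--S\'os--type stability statement for $B_n$-free graphs that you have not proved, and you note that tracking the constants down to $m\ge 5n+3$ is where the difficulty lies. So the proposal is a reasonable outline, not a proof; if you want to complete it you would need to consult Zhou's original argument or supply the missing stability lemma with explicit constants.
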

\noindent This result proves that $W_{m}$ is $B_{n}$-good. The motivating observation is that the graph $\mathbb{K}_{2,n}$ is sandwiched between the star and book graphs. In particular,
\begin{equation*}
\mathbb{K}_{1,n}\subset \mathbb{K}_{2,n}\subset B_{n}. 
\end{equation*}
Here, $H\subset G$ means a copy of the graph $H$ is contained in $G$ as a subgraph. A graph $G$ is $ \mathbb{K}_{1,n}$-free if and only if the maximum degree $\Delta(G)\leq n-1$. As a consequence, we get a lower bound on the minimum degree of $\overline{G}$. This helps to identify the embedding of particular structures inside $\overline{G}$. Whereas in the case of $\mathbb{K}_{2,n}$, we do not have that particular advantage due to its structural complexity and dependency. Hence, the Ramsey goodness of $\mathbb{K}_{2,n}$ versus other graphs has not been studied yet in comparison to the star. The $C_{m}$-goodness of $\mathbb{K}_{2,n}$ for odd $m$ has been established as follows:-  
\begin{theorem}\label{Our result}\cite[Theorem~1.8]{gupta2025study}
For each odd integer $m\geq 7$ and for each integer $n\geq2m+499$ and $n\geq 3493$, then 
\begin{equation*}
R(\mathbb{K}_{2,n},C_{m})= 2n+3. 
\end{equation*}
\end{theorem}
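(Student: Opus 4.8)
The plan is to prove the two inequalities separately, the lower bound being immediate and the upper bound carrying all the difficulty. For $R(\mathbb{K}_{2,n},C_m)\geq 2n+3$ I would invoke Burr's lower bound stated above with $G=\mathbb{K}_{2,n}$ (which is connected) and $H=C_m$: since $m$ is odd we have $\chi(C_m)=3$ and $\sigma(C_m)=1$, and $|V(\mathbb{K}_{2,n})|=n+2\geq \sigma(C_m)$, so Burr's estimate gives $(n+1)(3-1)+1=2n+3$. Concretely, the extremal configuration is $\Gamma=2K_{n+1}$ on $2n+2$ vertices: it contains no $\mathbb{K}_{2,n}$ because each component has only $n+1<n+2$ vertices, while $\overline{\Gamma}=\mathbb{K}_{n+1,n+1}$ is bipartite and hence has no odd cycle, in particular no $C_m$.

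For the upper bound I would take an arbitrary graph $\Gamma$ on $N=2n+3$ vertices with $\mathbb{K}_{2,n}\not\subseteq\Gamma$ and aim to locate $C_m\subseteq\overline{\Gamma}$. The first step is to rephrase the hypothesis as a codegree condition: $\mathbb{K}_{2,n}$-freeness means $|N_{\Gamma}(u)\cap N_{\Gamma}(v)|\leq n-1$ for every pair $u\neq v$. Counting the $2n+1$ vertices outside $\{u,v\}$ then yields $|N_{\overline{\Gamma}}(u)\cup N_{\overline{\Gamma}}(v)|\geq n+2$, so $\deg_{\overline{\Gamma}}(u)+\deg_{\overline{\Gamma}}(v)\geq n+2$ for every pair. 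Consequently at most one vertex of $\overline{\Gamma}$ can have degree below $(n+2)/2$; after discarding this single exceptional vertex I obtain a subgraph on at least $2n+2$ vertices with $\delta(\overline{\Gamma})\geq n/2$. Because $n\geq 2m+499$, this minimum degree already exceeds $m$, which places the target cycle length well inside the short-cycle regime and leaves ample room (roughly $2n$ vertices) outside any cycle of length $m$.

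The second structural input is non-bipartiteness. If $\overline{\Gamma}$ were bipartite with parts $X,Y$, then $\Gamma[X]$ and $\Gamma[Y]$ would both be cliques, and since $|X|+|Y|=2n+3$ one of them has at least $n+2$ vertices; a clique $K_{n+2}$ contains $\mathbb{K}_{2,n}$, a contradiction. Hence $\overline{\Gamma}$ contains an odd cycle, and since its minimum degree is of order $n$ a short BFS/diameter argument forces the odd girth to be bounded by an absolute constant. My plan is then to start from a shortest odd cycle and repeatedly enlarge it by \emph{two} vertices at a time — replacing a cycle edge $xy$ by a path $x\,a\,b\,y$ through two fresh outside vertices, or by a chord-and-ear argument in the spirit of P\'osa rotations on a longest path — until its length reaches exactly $m$; the large degrees and the codegree bound are used to guarantee at each stage that the required connecting edges or common neighbours exist among the many vertices lying outside the current short cycle.

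The hard part will be exactly this length-and-parity control: converting ``there is an odd cycle and the host graph is dense'' into ``there is an odd cycle of precisely length $m$.'' The bound $\delta(\overline{\Gamma})\geq n/2$ sits just below the threshold of the classical weak-pancyclicity theorems, so I cannot simply quote them and must instead exploit the extra codegree structure to drive the ear-extension argument and to certify that each incremental step of length $2$ is available. A further technical nuisance is the single low-degree vertex deleted earlier: removing one vertex can destroy all odd cycles, so I would either argue non-bipartiteness robustly (producing an odd cycle that avoids the exceptional vertex) or recover a $\mathbb{K}_{2,n}$ directly from the near-clique that a one-vertex-deleted bipartition of $\overline{\Gamma}$ would force inside $\Gamma$.
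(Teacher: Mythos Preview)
This theorem is quoted from \cite{gupta2025study} and is not proved in the present paper; it is invoked only as a black box in Lemma~\ref{upper bound Del(barG)}. So there is no in-paper argument to compare against, and I comment only on the proposal itself. Your lower bound is correct, and so are the opening structural steps of the upper bound: the codegree condition $|N_\Gamma(u)\cap N_\Gamma(v)|\le n-1$ does give $\deg_{\overline\Gamma}(u)+\deg_{\overline\Gamma}(v)\ge n+2$ for every pair, hence at most one vertex of $\overline\Gamma$ has degree below $(n+2)/2$; and the clique-in-a-part observation correctly shows $\overline\Gamma$ is not bipartite.

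The genuine gap is precisely the step you flag as ``the hard part,'' and it is not merely hard but absent. The ear-extension scheme (replace a cycle edge $xy$ by a path $xaby$) is a wish list, not an argument: you supply no reason why fresh vertices $a,b$ with $xa,ab,by\in E(\overline\Gamma)$ exist at each stage, and the codegree bound in $\Gamma$ does not obviously manufacture them. More seriously, you never address $2$-connectivity. With $\delta(\overline\Gamma)\approx |V|/4$ the graph $\overline\Gamma$ can have a cut-vertex, and then no ear or rotation argument crosses the cut; the odd cycle may sit entirely in one block while the large degrees are realised across the cut into another. A workable route---and exactly the one the toolkit assembled in this paper is built for---is to use Lemma~\ref{Star-cycle} to decompose (after deleting a bounded set $U$) into a few $2$-connected blocks, argue via the clique observation that at least one block remains non-bipartite, and then apply Lemma~\ref{cycle lemma 1} followed by Lemma~\ref{cycle lemma 2} to that block to obtain every odd cycle length in $[K,\mathop{oc}-K]\ni m$. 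The specific thresholds $499$ and $3493$ in the statement are what fall out of quantifying $K(\varepsilon)$ in this last step; an ad-hoc ear-extension would not recover them.
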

\noindent However, in the case of the $\mathbb{K}_{2,n}$ graph versus wheel graph, there is no such explicit study till now. Although we get a direct consequence of $\mathbb{K}_{2,n}$-goodness of $W_{m}$ from Theorem~\ref{zhou} as $ R(W_{m},\mathbb{K}_{2,n})\leq  R(W_{m},B_{n})$. Now it is an interesting question to ask whether $\mathbb{K}_{2,n}$ is $W_{m}$-good or not. In this article, we answer this question in the affirmative for large $n$ and $m$.

\begin{theorem}\label{main theorem}
If $n$ and $m$ are sufficiently large positive integers satisfying $n\geq 4m$, where $m$ is an odd integer, then
\begin{equation*} 
R(\mathbb{K}_{2,n}, W_{m})=3n+4.
\end{equation*}
\end{theorem}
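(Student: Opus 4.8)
The plan is to prove the equality $R(\mathbb{K}_{2,n},W_m)=3n+4$ by establishing matching lower and upper bounds, with the upper bound carrying essentially all of the difficulty.

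\textbf{The lower bound.}
First I would produce a $2$-colouring of $K_{3n+3}$ (equivalently a graph $\Gamma$ on $3n+3$ vertices) with no copy of $\mathbb{K}_{2,n}$ in $\Gamma$ and no copy of $W_m$ in $\overline{\Gamma}$. Since $\chi(W_m)=3$ for odd $m$ and $\sigma(W_m)=1$, Burr's bound already gives $R(\mathbb{K}_{2,n},W_m)\ge (n+2-1)(3-1)+1=2n+3$, which is not strong enough; the true extremal construction must be tailored to $\mathbb{K}_{2,n}$ rather than to the generic colour-class count. The natural candidate is to take $\Gamma$ to be the disjoint union of three cliques, each of order $n+1$: then $\overline{\Gamma}$ is a complete tripartite graph $K_{n+1,n+1,n+1}$, which is $W_m$-free because every odd wheel needs an odd cycle through a hub, and I would check that $\Gamma$ itself (a disjoint union of cliques of order $n+1$) contains no $\mathbb{K}_{2,n}$, since any two vertices lie in a common clique of order only $n+1$ and hence share at most $n-1$ common neighbours. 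This realises $3(n+1)=3n+3$ vertices with neither monochromatic target, giving $R(\mathbb{K}_{2,n},W_m)\ge 3n+4$.

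\textbf{The upper bound.}
For the upper bound I must show that every graph $\Gamma$ on $N=3n+4$ vertices with no $\mathbb{K}_{2,n}$ forces $W_m\subseteq\overline{\Gamma}$. Working in the complement $\overline{\Gamma}=:G$, the $\mathbb{K}_{2,n}$-freeness of $\Gamma$ translates into a strong lower bound on co-degrees in $G$: any two vertices of $G$ have at most $n-1$ common non-neighbours, so they have many common neighbours in $G$, giving $G$ a large minimum degree and strong quasi-random expansion. The strategy is then to locate an odd wheel $W_m=K_1\circledast C_m$ inside $G$: first fix a suitable hub vertex $v$ of high degree, and then find an odd cycle $C_m$ of length exactly $m$ inside the neighbourhood $N_G(v)$. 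The key structural engine is that the co-degree condition makes the induced subgraph $G[N_G(v)]$ dense and highly connected, so I would invoke a pancyclicity/long-cycle argument—finding cycles of all lengths up to the relevant bound, and in particular an odd cycle of the prescribed length $m$—controlled by the hypothesis $n\ge 4m$, which guarantees the neighbourhood is large enough to house a cycle of length $m$ with room to spare.

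\textbf{Where the difficulty lies, and the probabilistic ingredient.}
The main obstacle, flagged by the authors' own remark about the decomposition $V=P\sqcup Q$ into a sparse ``bad'' set and a dense ``good'' set, is that the co-degree condition does not uniformly force high degree everywhere: a small set $P$ of low-degree (or otherwise irregular) vertices can obstruct a naive greedy embedding of the cycle. The plan is therefore to first isolate and discard this bad set $P$ with $|P|/|V|\to 0$, show that the dense remainder $Q$ inherits the expansion needed, and then run the cycle-finding argument inside $Q$. The probabilistic method enters here: rather than deterministically selecting the cycle vertices, I would choose them via a random sampling/alteration argument, using the co-degree bounds to show that a random choice of $m$ vertices from $N_G(v)\cap Q$ almost surely admits a cyclic ordering in which consecutive vertices are adjacent in $G$, after deleting the few bad choices. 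Establishing that the failure probability is strictly less than $1$—which is exactly where the quantitative hypotheses ``$n$ and $m$ sufficiently large'' and $n\ge 4m$ are consumed—will be the technical heart of the proof, and controlling the dependency tree of these adjacency events (the ``dependency'' the authors warn about for $\mathbb{K}_{2,n}$) is the step I expect to be hardest.
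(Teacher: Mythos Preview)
Your lower bound is correct and identical to the paper's.  Your overall shape for the upper bound---choose a hub $v$ of large $\overline{\Gamma}$-degree and find a $C_m$ inside its $\overline{\Gamma}$-neighbourhood---is also the paper's shape.  But the plan as written has two genuine gaps.

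\textbf{Non-bipartiteness is missing.}  You never address why the induced neighbourhood $\overline{H}=\overline{\Gamma}[N_{\overline{\Gamma}}(v)]$ contains \emph{any} odd cycle.  A priori $\overline{H}$ could be bipartite, and then no amount of expansion or pancyclicity will produce $C_m$ for odd $m$.  In the paper this is the single hardest step: it occupies Lemma~\ref{nbd conservation}, Corollary~\ref{nbd conservation 3}, Lemma~\ref{maximum bipartition} and the long Lemma~\ref{nbd conservation 2}, and is handled by assuming a bipartition $A\sqcup B$ of $\overline{H}$ and then using the first-moment ``intersection lemma'' (Lemma~\ref{intersection lemma}) to locate $x,y\in A$ with $|N_\Gamma(x)\cap N_\Gamma(y)|\ge n$, contradicting $\mathbb{K}_{2,n}$-freeness of $\Gamma$.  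The argument splits on whether $|V(\overline{H})|$ lies below or above $\tfrac{3}{2}(n+1)$ and in the upper range requires a delicate case analysis on the bipartitions of several high-degree neighbourhoods simultaneously.  Your proposal does not touch this.

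\textbf{The probabilistic step is misplaced.}  You propose to find the cycle $C_m$ itself by randomly sampling $m$ vertices from $N_{\overline{\Gamma}}(v)\cap Q$ and showing a cyclic ordering exists with positive probability.  This is not what the paper does, and it is not clear it can be made to work: the events ``consecutive sampled vertices are adjacent'' are highly dependent, and the co-degree condition only bounds pairwise common \emph{non}-neighbours, which is the wrong direction for building adjacencies in $\overline{\Gamma}$.  In the paper the probability is used \emph{only} for first-moment degree control: (i)~to show $\delta(\Gamma)<\sqrt{3}(n+1)$, hence $\Delta(\overline{\Gamma})>(3-\sqrt{3})(n+1)$ (Lemma~\ref{upper bound del(G)}); and (ii)~the intersection lemma above.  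The cycle $C_m$ is then found \emph{deterministically} by invoking external structural results: Voss's theorem (Lemma~\ref{cycle lemma 1}) gives $\mathop{oc}(\overline{H})\ge 2\delta(\overline{H})-1$ once $\overline{H}$ is $2$-connected and non-bipartite, and the Gould--Haxell--Scott theorem (Lemma~\ref{cycle lemma 2}) then supplies every odd length in $[K,\mathop{oc}(\overline{H})-K]$.  The decomposition into $2$-connected pieces (Lemma~\ref{Star-cycle}) handles the case $\mathop{k}(\overline{H})\le 1$.  Incidentally, the ``bad set'' $P$ satisfies $|P|\le 1$, not merely $|P|=o(|V|)$; this exact bound is what makes the subsequent case analysis tractable.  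Finally, note that the upper bound $\Delta(\overline{\Gamma})\le 2(n+1)$ comes from quoting the earlier result $R(\mathbb{K}_{2,n},C_m)=2n+3$ (Theorem~\ref{Our result}); this is another ingredient absent from your outline.
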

\begin{remark}
The precise largeness requirements on $n$ and $m$ are addressed in detail in Construction~\ref{construction}. The reader is referred to \eqref{large integer construction} for further clarification.
\end{remark}

It is quite challenging to analyse a $\mathbb{K}_{2,n}$-free graph $G$. Unlike the situation for star-free graphs, we do not obtain any immediate information about the maximum degree of $G$. Nevertheless, two basic observations concerning the neighbourhoods of each pair  $u,v\in V(G)$ are staring us in the face. The first is the identity
\begin{equation}\label{nbd complementary dependency}
\{u\}\sqcup N_{G}(u)\sqcup N_{\overline{G}}(u)=V(G),
\end{equation}
which holds for each graph $G$. The second observation, which holds for a $\mathbb{K}_{2,n}$-free graph $G$ and plays a crucial role in our argument, is the necessary and sufficient condition for $\mathbb{K}_{2,n}$-freeness, namely the bound
\begin{equation}\label{K_2n freeness dependency}
|N_{G}(u)\cap N_{G}(v)|\leq n-1
\end{equation}
for each pair of distinct vertices $u,v\in V(G)$. These observations allow us to extract a non-trivial lower bound on the minimum degree of the induced subgraph $\overline{G}[V(G)-\{x\}\,]$ for a suitable choice of $x\in V(G)$.

In certain cases, this information alone is not sufficient to guarantee the embedding of the required structure inside $\overline{G}$. To address this issue, our proof repeatedly applies the \emph{first moment method}, starting with the derivation of an upper bound on $\delta(G)$. The intuition is that if $G$ has a sufficiently large minimum degree, then $G$ must contain a copy of 
$\mathbb{K}_{2,n}$. The probabilistic aspect of this argument is explained in detail within the proof, rendering the exposition self-contained. In particular, we devote substantial effort to proving a self-contained structural rigidity question concerning highly dependent neighbourhoods (Lemma~\ref{nbd conservation 2}) within our exposition.


\section{Paraphernalia: tracing an arc from the literature}

The length of the smallest cycle in the graph is called the \emph{girth}. The \emph{circumference} is the length of the largest cycle. The \emph{ even circumference} and \emph{odd circumference} of a graph $G$ are the length of the largest even and odd cycle subgraph, respectively. In this article, we denote them by $\mathop{ec}(G)$ and $\mathop{oc}(G)$ respectively. A graph $G$ of order $n$ is pancyclic if it contains cycles of all lengths between $3$ and $n$. A \emph{weakly pancyclic} graph is a graph containing cycles of all lengths between its girth and circumference. 

The first step of the proof is to show that a $\mathbb{K}_{2,n}$-free graph $G$ on $3n+4$ vertices contains a vertex $v$ with degree sufficiently small. Consequently, this vertex contains a sufficiently large neighbourhood $\overline{H}$ (say) in $\overline{G}$. We aim to show the existence of a $C_{m}$ inside that large neighbourhood. Together with $v$, we construct a copy of $W_{m}$, which leads to a contradiction. In order to show this, our first goal is to prove that $\overline{H}$ is non-bipartite as our $m$ is odd. We have explained the proof in detail in a later section. After this, we require a sufficient condition on the minimum degree of $\overline{H}$ to guarantee the existence of the $C_{m}$. A few results related to pancyclic and weakly pancyclic graphs are beneficial in this regard. However, in our case, we observe that the degree conditions are insufficient. Hence, we need the following two results to prove the existence of a $C_{m}$ inside $\overline{H}$. 

\begin{lemma}\label{cycle lemma 1}\cite{MR1005319}
If $G$ forms a non-bipartite graph with $\kappa(G)\geq2$, $\delta(G)\geq r\geq3$ and $|V(G)|\geq 2r+1$, then $\mathop{ec}(G)\geq 2r$ and $\mathop{oc}(G)\geq 2r-1$. 
\end{lemma}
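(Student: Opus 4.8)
The plan is to separate the circumference bound from the parity control, treating the latter as the genuine difficulty. First I would fix a longest path $P = v_0 v_1 \cdots v_k$ in $G$; since $P$ is longest, every neighbour of $v_0$ and of $v_k$ lies on $P$, so $k \geq \delta(G) \geq r$. Using $2$-connectivity together with the classical Dirac-type lower bound on the circumference of a $2$-connected graph, one obtains a cycle of length at least $\min\{2\delta(G), |V(G)|\}$, which under the hypotheses $\delta(G)\geq r$ and $|V(G)|\geq 2r+1$ is at least $2r$. Thus $G$ has a longest cycle $C$ with $c := |C| \geq 2r$. The parity of $c$ already settles one half of the statement: if $c$ is even then $\mathop{ec}(G) \geq c \geq 2r$, while if $c$ is odd then in fact $c \geq 2r+1$, so $\mathop{oc}(G) \geq 2r+1 \geq 2r-1$. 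In either case it remains only to produce a single long cycle of the \emph{opposite} parity.

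The key arithmetic I would exploit is that a chord $x_i x_j$ of $C$ splits it into two sub-cycles whose lengths sum to $c+2$; hence when $c$ is odd exactly one of the two sub-cycles is even, while when $c$ is even a chord spanning an arc of even length yields two odd sub-cycles. So the target reduces to locating a chord (or, more generally, an ear) of $C$ that both has the correct parity and spans a long arc, so that the resulting flipped-parity cycle still has length at least $2r$ (respectively at least $2r-1$). I would first try to obtain such a chord directly from the degree hypothesis: every vertex of $C$ has at least $r$ neighbours, and since $c \geq 2r$ a large proportion of these neighbours lie on $C$, producing many chords; a counting argument over the arc-lengths spanned by these chords should force one of the desired parity whose long arc has length at least $2r-2$.

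The main obstacle is the case in which no such chord exists, that is, when $C$ is (nearly) induced and the degree mass of its vertices is pushed off $C$. Here I would invoke non-bipartiteness and $2$-connectivity simultaneously: since $G$ is non-bipartite it contains an odd closed substructure, and by Menger's theorem there are two vertex-disjoint paths joining that substructure to the long cycle $C$. Splicing this odd detour into a long arc of $C$ produces a cycle of the opposite parity, and the crux of the argument is to bound the length lost in this splicing so that the new cycle still has length at least $2r$ when it is forced to be even and at least $2r-1$ when it is forced to be odd. Controlling this loss is exactly where the quantitative hypotheses $\delta(G) \geq r$ and $|V(G)| \geq 2r+1$ enter, and it is the step I expect to require the most care; the maximality of $C$ is what prevents the detour from being absorbed into a still longer cycle, thereby pinning down the parity while keeping the length within one of $2r$.
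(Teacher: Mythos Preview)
The paper does not supply a proof of this lemma at all: it is quoted verbatim from Voss \cite{MR1005319} and used as a black box, so there is no ``paper's own proof'' to compare your attempt against.

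As for your outline on its own merits: the first half is solid --- the Dirac circumference bound for $2$-connected graphs does give a longest cycle $C$ of length $c \geq \min\{2\delta, |V(G)|\} \geq 2r$, and the parity split you describe settles one of $\mathop{ec}(G)$, $\mathop{oc}(G)$ immediately. The genuine gap is exactly where you flag it: producing a cycle of the \emph{opposite} parity with length at least $2r$ (resp.\ $2r-1$). Your chord-counting sketch does not close this, because a vertex on $C$ could have most of its neighbours off $C$ (nothing forces many chords), and even when chords exist, the counting you propose does not obviously force one whose long arc has the required parity \emph{and} length simultaneously. Your fallback --- splice an odd substructure into $C$ via two Menger paths --- is the right instinct, but ``bound the length lost in this splicing'' is the entire content of the lemma, not a routine detail: the Menger paths can be long and can attach to $C$ at nearby points, and maximality of $C$ alone does not prevent the resulting cycle from being short. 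Voss's original argument (and related work of Bondy) handles this with a considerably more delicate structural analysis of how off-cycle vertices attach to a longest cycle, tracking both endpoints of the ear and exploiting the full force of the degree condition at several vertices at once. What you have written is a reasonable plan of attack, but it is not yet a proof, and the missing step is not a technicality --- it is the theorem.
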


\noindent In the above statement $\kappa(\Gamma)$ denotes the \emph{connectivity parameter} to deal the connectivity issues of the graph $\Gamma$. If $V(\Gamma)$ admits a partition $R\sqcup S$, where the removal of $R$ produces an induced subgraph $\Gamma[S]$ that is not connected. Thus such parameter, $\kappa(\Gamma)$ formally defined as,
\begin{equation*}
\kappa(\Gamma)=\min\left\{|R|: V(\Gamma)=R\sqcup S \textup{ and }\Gamma[S]\textup{  is not connected }\right\}.
\end{equation*}
In our setup, we are concerned only with the odd circumference. Accordingly, we construct a non-bipartite graph $\Gamma$ satisfying $\kappa(\Gamma)\geq2$, and we choose $r\geq3$  such that if $\delta(\Gamma)\geq r$ holds then $|V(\Gamma)|\geq2r+1$ holds. In this article, we take $r$ to be a suitable fraction of $V(\Gamma)$, for instance  $r=\frac{|V(\Gamma)|}{10}$ or $r=\frac{|V(\Gamma)|}{7}$ or $r=\frac{|V(\Gamma)|}{5}$. Then using Lemma~\ref{cycle lemma 1}, we conclude that such a graph $\Gamma$ contains an odd cycle of length at least $2r-1$.

\begin{lemma}\label{cycle lemma 2}\cite[Theorem~1]{MR1939069}
For each $\varepsilon>0$, there exists a real number $K=\frac{75\cdot10^{4}}{\varepsilon^{5}}=K(\varepsilon)$ (say) such that if $N$ is an integer with $N\geq \frac{45K}{\varepsilon^{4}}$ and $G$ is a graph with $\frac{\delta(G)}{|V(G)|}\geq\varepsilon$, where $|V(G)|\geq N$, then such $G$ contains a copy of cycle with length $m$, for each even integer $m\in [4, \mathop{ec(G)}-K]$ and for each odd integer $m\in [K, \mathop{oc}(G)-K]$. 
\end{lemma}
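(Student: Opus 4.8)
The plan is to derive the full cycle spectrum from two extremal cycles: a longest even cycle, of length $\mathop{ec}(G)$, and a longest odd cycle, of length $\mathop{oc}(G)$. Starting from each of these I would repeatedly \emph{shorten} the cycle in small, parity-preserving steps, arranging that no admissible length is ever skipped. The asymmetry between the two target intervals --- even lengths reaching all the way down to $4$, odd lengths only down to $K$ --- is built into the method from the start: density alone forces short even cycles but can never force short odd ones (a balanced complete bipartite graph has $\delta(G)=n/2$ yet no odd cycle at all), so the odd interval must be truncated from below.

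First I would extract the structural consequences of $\delta(G)\ge\varepsilon n$. Such a graph has $\gg n^{3/2}$ edges, so by a K\H{o}v\'ari--S\'os--Tur\'an / book-type count it already contains a copy of $C_4$ and, iterating, of every short even cycle; this single observation settles the bottom of the even interval. More importantly, high minimum degree makes $G$ expand: any two vertices are joined by many internally disjoint short paths, and every sufficiently long cycle $C$ is crossed by a linear number of chords or carries a linear number of off-cycle vertices with at least two neighbours on $C$. This abundance is the engine of the shortening step.

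The shortening step itself replaces an arc of a cycle by a shorter connection. Given a cycle $C$ of length $\ell$, a chord, or more generally a short \emph{ear} --- a path $P$ between $x,y\in C$ whose interior avoids $C$ --- lets me substitute the longer $x$--$y$ arc of $C$ by $P$, producing a cycle whose length is $\ell$ decreased by the difference between the arc length and $|P|$. By selecting chords and ears whose endpoints sit at controlled positions along $C$, I can realise a prescribed small decrement, in particular a step of exactly $2$ (same parity) or, where an odd ear is available, exactly $1$. Iterating from $\mathop{ec}(G)$ (respectively $\mathop{oc}(G)$) then sweeps downward through every even (respectively odd) length, stopping only when $C$ becomes too short to host the adjustment arcs --- which have length $O(1/\varepsilon)$ --- below which, for even lengths, the short cycles from the previous step take over. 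This is precisely the origin of the constant $K$: we forfeit the top $K$ lengths, where the first shortenings may delete an $O(1/\varepsilon)$-sized chunk, and for odd cycles we forfeit the bottom $K$ lengths, below which the available ears no longer tile the remaining interval.

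The hard part, and where the explicit bound $K=O(\varepsilon^{-5})$ is paid for, is the quantitative control of the decrements: I must show not merely that many lengths are attainable, but that the attainable lengths contain \emph{every} integer of the correct parity throughout a window, uniformly as the cycle shrinks. Concretely this reduces to a spreading statement --- that the attachment points of chords and ears are distributed densely enough along $C$ that consecutive admissible shortenings differ by at most the parity step --- together with a P\'osa-type rotation argument to manufacture ears of the needed lengths. Turning this spreading into an explicit polynomial dependence on $1/\varepsilon$, while tracking how the small errors accumulate over the $\Theta(n)$ iterations, is the genuine obstacle; it is here that the slack of size $K$ at both ends of each interval becomes unavoidable.
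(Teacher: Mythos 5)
This lemma is not proved in the paper at all: it is imported verbatim from Gould--Haxell--Scott \cite{MR1939069}, so your attempt has to be measured against that proof, and against the standard a proof must meet. What you have written is a plan, not a proof, and the decisive assertions are precisely the ones you flag as ``the genuine obstacle'' and never establish. Two of them deserve to be named. First, the claim that by selecting chords or ears ``whose endpoints sit at controlled positions along $C$'' you can always realise a decrement of exactly $2$ is the entire content of the theorem, and it is false as a purely local statement: in a blow-up of a long odd cycle (independent sets of size $\varepsilon n$ arranged cyclically, which has $\delta(G)\approx 2\varepsilon n$) all chords and short ears of a canonical long cycle have spans drawn from a restricted set, and no single local exchange of the kind you describe need produce a shortening by $2$; any correct argument has to be global. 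Second, your accounting is internally inconsistent: $K$ is a constant depending only on $\varepsilon$, yet you propose to ``track how the small errors accumulate over the $\Theta(n)$ iterations.'' Any nonzero per-step error accumulates to a loss growing with $n$, not a constant, so the slack of size $K$ cannot be generated this way --- every step except a bounded number must be \emph{exact}, which is exactly what you have not shown. (Your bipartite example also only shows the odd interval can be empty, since there $\mathop{oc}(G)$ is undefined; the blow-up example above is what shows the lower truncation of the odd range at roughly $1/\varepsilon$ is genuinely necessary.)

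The cited proof avoids iterated shortening altogether at the critical point. In essence, Gould, Haxell and Scott handle the very short even lengths by density (as you do, via Bondy--Simonovits--type bounds, and this part of your sketch is sound), but for the main range they establish an \emph{interval lemma} for path lengths: in a graph of linear minimum degree, suitable pairs of vertices are joined by paths of every admissible length of a given parity throughout a long interval. One then fixes a longest even (respectively odd) cycle, splits it into two arcs, and substitutes paths of all intermediate lengths for one arc; the constant $K=O(\varepsilon^{-5})$ and the threshold $N\geq 45K/\varepsilon^{4}$ fall out of the quantitative form of that lemma, not out of error tracking along a shrinking cycle. So the gap is not cosmetic: the one-step-at-a-time shortening architecture you chose is the wrong skeleton for obtaining a constant $K$, and the path-length interval statement that replaces it is a theorem in its own right, not a refinement of your sketch.
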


Another key lemma used in our proof is Lemma 7 of \cite{MR4597476}. To apply Lemma \ref{cycle lemma 1} to a graph $G$, we require a sufficient condition $\kappa(G)\geq2$. Accordingly, we decompose the graph $\overline{H}$ into disjoint components such that ``almost" each component satisfies this requirement. We restate Lemma 7 of \cite{MR4597476} below; its notably concise proof appears in the cited work. The underlying argument relies on an iterative procedure together with a construction directed by the connectivity parameter. The central observation driving this lemma is that ``$r<k$", which follows from \eqref{nbd dependency}. 

\begin{lemma}\label{Star-cycle}
For each graph $G$ with $\delta(G)\geq k+\frac{|V(G)|}{k}$, where $k\in[2,|V(G)|]$, then vertex set $V(G)$ admits a decomposition 
\begin{equation*}
 R\sqcup S_{1}\sqcup\ldots\sqcup S_{r},   
\end{equation*}
where $|R|\leq r-1$, $r<k$, and the induced subgraph $G[S]$ satisfies $\kappa(G[S])\geq2$,  for each $S\in\{S_{1},\ldots,S_{r}\}$.
\end{lemma}

The following lemma plays a key role in our proof, as it shows that the neighbourhood of the maximum-degree vertex in the graph we study is non-bipartite. This idea serves as a guiding principle of this article. 
\begin{lemma}\label{intersection lemma}
Let $G$ be a bipartite graph with a bipartition $A\sqcup B$ and $|N_{B}(a)|\geq d$ for each $a\in A$. Then there exist at least two vertices $x,y\in A$ such that 
\begin{equation*}
 |N_{B}(x)\cap N_{B}(y)|> \frac{d^{2}}{|B|}-\frac{d}{|A|}.   
\end{equation*}
\end{lemma}
\begin{proof}
We proceed with the proof using the first moment method. We perform a random experiment of choosing two vertices one by one without replacement from $A$. In this case, the sample space equals $\Omega=\{(x,y): x,y\in A; x\neq y\}$, which has size $|\Omega|=|A|(|A|-1)$. This yields the probability space $(\Omega, 2^{\Omega},\Pr)$,  where each singleton sample assigned equal probability, i.e. $\Pr(\{\omega\})=\frac{1}{|\Omega|}$ for each $\omega\in\Omega$. We further construct the random variable $X:\Omega\rightarrow \mathbb{R}$, where for each $(x,y)\in\Omega$

 \begin{equation*}
X(x,y)=|N_{B}(x)\cap N_{B}(y)|.
\end{equation*}
For a vertex $u\in B$, we construct the random variable $X_{u}:\Omega\rightarrow \mathbb{R}$, where for each $(x,y)\in\Omega$ 
\begin{align*}
X_{u}(x,y)&=\left\{\begin{array}{lll}
			1& \textnormal{ if  }  u\in N_{B}(x)\cap N_{B}(y)\\
			0& \textnormal{ else }&  
		\end{array}\right..
\end{align*}
Here, $X_{u}$ forms a Bernoulli random variable with parameter $\Pr(X_{u}=1)$, where 
\begin{equation*}
\Ex[X_{u}]= \Pr(X_{u}=1)= \frac{|N_{A}(u)|(|N_{A}(u)|-1)}{|\Omega|}=\frac{|N_{A}(u)|(|N_{A}(u)|-1)}{|A|(|A|-1)}. 
\end{equation*}
From the construction, it follows that $X=\underset{u\in B}{\sum}X_{u}$. Using the linearity of expectation, we have 
\begin{equation*}
\Ex[X]=\underset{u\in B}{\sum}\Ex[X_{u}]=\underset{u\in B}{\sum}\Pr (X_{u}=1)=\underset{u\in B}{\sum} \frac{|N_{A}(u)|(|N_{A}(u)|-1)}{|A|(|A|-1)}.   
\end{equation*}
We use Jensen's inequality in the form 
\begin{equation*}
\left(\frac{\underset{u\in B}{\sum}|N_{A}(u)|-|B|}{|B|}\right)^{2}=\left(\frac{\underset{u\in B}{\sum}(|N_{A}(u)|-1)}{|B|}\right)^{2}\leq \frac{\underset{u\in B}{\sum}(|N_{A}(u)|-1)^{2}}{\underset{u\in B}{\sum}1},   
\end{equation*}
and further estimate $\Ex[X]$.  
\begin{align*}
\Ex[X]=\underset{u\in B}{\sum} \frac{|N_{A}(u)|(|N_{A}(u)|-1)}{|A|(|A|-1)}
=&\underset{u\in B}{\sum}\frac{(|N_{A}(u)|-1)^{2}}{|A|(|A|-1)}+ \underset{u\in B}{\sum}\frac{(|N_{A}(u)|-1)}{|A|(|A|-1)}\\
         = &\frac{|B|}{|A|(|A|-1)} \frac{\underset{u\in B}{\sum}(|N_{A}(u)|-1)^{2}}{\underset{u\in B}{\sum}1}+ \frac{\underset{u\in B}{\sum}|N_{A}(u)|}{|A|(|A|-1)}-\frac{|B|}{|A|(|A|-1)}\\
         \geq  &\frac{|B|}{|A|(|A|-1)} \left(\frac{\underset{u\in B}{\sum}|N_{A}(u)|-|B|}{|B|}\right)^{2}+\frac{\underset{u\in B}{\sum}|N_{A}(u)|}{|A|(|A|-1)}-\frac{|B|}{|A|(|A|-1)}\\
\end{align*}
Using the \emph{duality formula} $\underset{w\in A}{\sum}|N_{B}(w)|=\underset{u\in B}{\sum}|N_{A}(u)|$ and the condition $|N_{A}(u)|\geq d$ for each $u\in A$, we have the following:-
\begin{align*}
\Ex[X] = &\frac{|B|}{|A|(|A|-1)} \left(\frac{\underset{w\in A}{\sum}|N_{B}(w)|-|B|}{|B|}\right)^{2}+\frac{\underset{w\in A}{\sum}|N_{B}(w)|}{|A|(|A|-1)}-\frac{|B|}{|A|(|A|-1)}\\
         \geq & \frac{|B|}{|A|(|A|-1)} \left(\frac{\underset{w\in A}{\sum}d-|B|}{|B|}\right)^{2}+\frac{\underset{w\in A}{\sum}d}{|A|(|A|-1)}-\frac{|B|}{|A|(|A|-1)}\\
         =& \frac{|B|}{|A|(|A|-1)} \left(\frac{d|A|}{|B|}-1\right)^{2}+\frac{d|A|}{|A|(|A|-1)}-\frac{|B|}{|A|(|A|-1)}\\
=&\frac{|B|}{|A|(|A|-1)} \left(\frac{d^{2}|A|^{2}}{|B|^{2}}-2d\frac{|A|}{|B|}+1\right)+\frac{d}{|A|-1}-\frac{|B|}{|A|(|A|-1)}\\
=&\frac{d^{2}|A|}{|B|(|A|-1)}-\frac{2d}{|A|-1}+\frac{|B|}{|A|(|A|-1)}+\frac{d}{|A|-1}-\frac{|B|}{|A|(|A|-1)}\\
=&\frac{d^{2}|A|}{|B|(|A|-1)}-\frac{d}{|A|-1}>\frac{d^{2}}{|B|}-\frac{d}{|A|-1}>\frac{d^{2}}{|B|}-\frac{d}{|A|}.
\end{align*} 
This implies that there exists at least one $(x,y)\in \Omega$ such that $X(x,y)> \frac{d^{2}}{|B|}-\frac{d}{|A|}$. This completes the proof.
\end{proof}

\begin{remark}
The definition of the random variable $X_{u}:\Omega\rightarrow\mathbb{R}$, in the above proof, provides the formal justification for the term \emph{dependent random choice}. For each sample point $(x,y)\in\Omega$, $u$ is not chosen independently; rather, its selection strictly depends on the pair $(x,y)$.     
\end{remark}

The Lemma~\ref{intersection lemma} can be further generalised to the case of a general graph $G$ and a subset $A\subset V(G)$. Although the proof idea is exactly similar. We consider $A\subset B= V(G)$ for some graph $G$ where each vertices $a\in A$ satisfies $|N_{G}(a)|\geq d$. The duality formula also works here, i.e.,
\begin{equation*}
 \underset{b\in V(G)}{\sum}|N_{G}(b)\cap A|=\underset{b\in V(G)}{\sum}|N_{A}(b)|=\underset{b\in B}{\sum}|N_{A}(b)|=\underset{a\in A}{\sum}|N_{B}(a)|=\underset{a\in A}{\sum}|N_{G}(a)|.
\end{equation*}
Applying Jensen's inequality along with the duality formula, we get the following corollary.

\begin{corollary}\label{intersection lemma 2}
Let $G$ be a graph with $A\subset B=V(G)$ and $|N_{G}(a)|\geq d$ for each $a\in A$. Then there exist at least two distinct vertices $x,y\in A$ such that 
\begin{equation*}
 |N_{G}(x)\cap N_{G}(y)|> \frac{d^{2}}{|B|}-\frac{d}{|A|}=\frac{d^{2}}{|V(G)|}-\frac{d}{|A|}.   
\end{equation*}
\end{corollary}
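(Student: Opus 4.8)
The plan is to repeat the first moment argument of Lemma~\ref{intersection lemma} almost verbatim, the only change being that the ambient vertex set $B=V(G)$ is no longer the opposite side of a bipartition. First I would run the identical random experiment: choose an ordered pair $(x,y)$ of distinct vertices uniformly from $\Omega=\{(x,y):x,y\in A,\ x\neq y\}$, so that $|\Omega|=|A|(|A|-1)$, and set $X(x,y)=|N_{G}(x)\cap N_{G}(y)|$. For each $u\in B$ I would introduce the indicator $X_{u}$ of the event $u\in N_{G}(x)\cap N_{G}(y)$; this event occurs exactly when both chosen vertices lie in $N_{A}(u):=N_{G}(u)\cap A$, so that
\[
\Ex[X_{u}]=\Pr(X_{u}=1)=\frac{|N_{A}(u)|(|N_{A}(u)|-1)}{|A|(|A|-1)}.
\]
Since $X=\sum_{u\in B}X_{u}$, linearity of expectation gives $\Ex[X]=\sum_{u\in B}\frac{|N_{A}(u)|(|N_{A}(u)|-1)}{|A|(|A|-1)}$, which is formally the same expression that appears in the lemma.

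From here the estimation is word for word the one carried out in Lemma~\ref{intersection lemma}. Writing $|N_{A}(u)|(|N_{A}(u)|-1)=(|N_{A}(u)|-1)^{2}+(|N_{A}(u)|-1)$ and applying Jensen's inequality to the square function over the $|B|$ terms (so that $\sum_{u\in B}(|N_{A}(u)|-1)^{2}\geq\frac{1}{|B|}\big(\sum_{u\in B}(|N_{A}(u)|-1)\big)^{2}$), followed by the duality formula $\sum_{b\in B}|N_{A}(b)|=\sum_{a\in A}|N_{B}(a)|$ and the hypothesis $|N_{B}(a)|=|N_{G}(a)|\geq d$, yields $\Ex[X]>\frac{d^{2}}{|B|}-\frac{d}{|A|}$. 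Because the average of $X$ over $\Omega$ exceeds this quantity, some outcome $(x,y)\in\Omega$ must satisfy $X(x,y)>\frac{d^{2}}{|B|}-\frac{d}{|A|}$, and these two distinct vertices $x,y\in A$ are the required pair.

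The only genuine point requiring verification---and the place where the loss of bipartiteness could in principle cause trouble---is that the duality formula survives when $A\subseteq B$ and edges lying inside $A$ are permitted. I expect this to be the main (though minor) obstacle, and I would settle it by double counting incidences: both $\sum_{b\in B}|N_{A}(b)|$ and $\sum_{a\in A}|N_{B}(a)|$ count ordered pairs $(a,b)$ with $a\in A$, $b\in V(G)$ and $ab\in E(G)$, an edge with both endpoints in $A$ simply being counted twice on each side, so the identity holds unchanged. Consequently no step of the computation in Lemma~\ref{intersection lemma} is affected, and the result follows as a direct transcription---which is precisely why it is recorded as a corollary rather than an independent lemma.
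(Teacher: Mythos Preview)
Your proposal is correct and matches the paper's own treatment essentially verbatim: the paper also records that the argument of Lemma~\ref{intersection lemma} carries over unchanged once one notes that the duality formula $\sum_{b\in B}|N_{A}(b)|=\sum_{a\in A}|N_{B}(a)|$ continues to hold for $A\subset B=V(G)$, and then applies Jensen's inequality exactly as before. Your explicit double-counting verification of the duality formula is a welcome addition that the paper leaves implicit.
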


\section{The Proof of Theorem~\ref{main theorem}}

We assume $m\geq 7$ is an odd integer and $n\geq 4m$ is sufficiently large. We construct the graph $G$, which is three disjoint unions of copies of $\mathbb{K}_{n+1}$. Then $\overline{G}$ is the complete tripartite graph with each part having size $n+1$. From the construction, it is clear that $G\not\subseteq \mathbb{K}_{2,n}$ and $\overline{G}\not\subseteq W_{m}$. This implies $R(\mathbb{K}_{2,n}, W_{m})>3n+3$.

To establish $R(\mathbb{K}_{2,n}, W_{m})=3n+4$, we assume that a graph $G$ with $3n+4$ many vertices does not contain a copy of $\mathbb{K}_{2,n}$. We show that its complement $\overline{G}$ contains a $W_{m}$. On the contrary, let $\overline{G}$ contain no $W_{m}$, we use the $\mathbb{K}_{2,n}$-freeness of $G$ to obtain a certain upper bound on the minimum degree of $G$. Consequently, this shows that $\Delta(\overline{G})$ is sufficiently large, which helps to prove the existence of a $C_{m}$ inside the neighbourhood of the maximum degree vertex of $\overline{G}$. Along with the maximum degree vertex, we get a $W_{m}$ in $\overline{G}$, which is a contradiction. Before proceeding, we need to prove some preliminary results that are used to prove the main theorem.

\begin{lemma}\label{upper bound Del(barG)}
 $\Delta(\overline{G})\leq 2n+2=2(n+1)$. 
\end{lemma}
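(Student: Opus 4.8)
The plan is to argue by contradiction against the two forbidden configurations. Suppose $\Delta(\overline{G}) \ge 2n+3$ and fix a vertex $v$ attaining it; write $\overline{H} := N_{\overline{G}}(v)$, so $|\overline{H}| \ge 2n+3$ and, since $V(G) = \{v\} \sqcup N_G(v) \sqcup \overline{H}$, every $u \in \overline{H}$ satisfies $N_G(u) \subseteq N_G(v) \cup \overline{H}$ with $|N_G(v)| = \deg_G(v) \le n$. Because $v$ is adjacent in $\overline{G}$ to all of $\overline{H}$, it suffices to locate a copy of $C_m$ inside $\overline{G}[\overline{H}]$: together with $v$ this produces a $W_m$ in $\overline{G}$, contradicting our standing assumption that $\overline{G}$ is $W_m$-free. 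The whole difficulty is therefore to extract, from $\overline{G}[\overline{H}]$, a subgraph on which the hypotheses of Lemmas~\ref{cycle lemma 1} and \ref{cycle lemma 2} can be met.

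First I would show $\overline{G}[\overline{H}]$ is non-bipartite. If it were bipartite with parts $A \sqcup B$, then each of $A,B$ would be a clique in $G$, and since $|A|+|B| \ge 2n+3$ one part has at least $n+2$ vertices; any two vertices of such a clique share at least $n$ common neighbours in $G$, producing a $\mathbb{K}_{2,n}$ — this is exactly the mechanism behind Lemma~\ref{intersection lemma} and Corollary~\ref{intersection lemma 2}. Next, to obtain a large minimum degree I would control the ``bad'' set $P \subseteq \overline{H}$ of vertices whose degree in $\overline{G}[\overline{H}]$ falls below $\tfrac{1}{10}|\overline{H}|$, equivalently whose $G$-degree inside $\overline{H}$ is large. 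Applying Corollary~\ref{intersection lemma 2} to $P$ (inside $B = \overline{H}$) together with the $\mathbb{K}_{2,n}$-freeness of $G$ forces $|P|$ to be bounded by an absolute constant; deleting $P$ yields a set $Q$ with $|Q| = |\overline{H}| - O(1)$ and $\delta(\overline{G}[Q]) \ge \tfrac{1}{10}|Q|$. Re-running the clique argument on $Q$ (completing a near-clique of size at least $n+1$ by one of the deleted high-$G$-degree vertices when necessary) keeps $\overline{G}[Q]$ non-bipartite.

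With $Q$ in hand I would apply Lemma~\ref{Star-cycle} with $k=10$ to delete fewer than $k$ vertices and split $\overline{G}[Q]$ into $2$-connected blocks whose minimum degree is still proportional to $n$; an odd cycle of $\overline{G}[Q]$ lies inside one non-bipartite block $C^\ast$. On $C^\ast$, Lemma~\ref{cycle lemma 1} gives $\operatorname{oc}(C^\ast) \ge 2\delta(C^\ast)-1$, and since $\delta(C^\ast) = \Omega(n)$ while $n \ge 4m$ forces $m \le n/4$, the odd circumference comfortably exceeds $m+K$. Feeding $C^\ast$ into Lemma~\ref{cycle lemma 2} with $\varepsilon = \tfrac{1}{10}$ (legitimate once $n$, and hence $|V(C^\ast)|$, is large enough and $m \ge K$) then yields an odd cycle of length exactly $m$ inside $C^\ast \subseteq \overline{G}[\overline{H}]$. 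Adjoining $v$ gives $W_m \subseteq \overline{G}$, the desired contradiction, so $\Delta(\overline{G}) \le 2n+2$.

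The step I expect to be the main obstacle is the simultaneous bookkeeping in the middle: guaranteeing that a single $2$-connected subgraph of $\overline{H}$ is at once non-bipartite, of minimum degree at least a fixed fraction of its order, and large enough that its odd circumference clears $m+K$. Each deletion — first the bad set $P$, then the cut set coming from Lemma~\ref{Star-cycle} — erodes both the minimum degree and potentially the non-bipartiteness, so the constant $\tfrac{1}{10}$, the constant bound on $|P|$, and the hypothesis $n \ge 4m$ must be balanced precisely against one another. This is exactly where the largeness requirements on $n$ and $m$ recorded in the Remark are consumed.
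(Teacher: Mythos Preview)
The paper's proof of this lemma is a single sentence: it invokes Theorem~\ref{Our result} (the already-established fact that $R(\mathbb{K}_{2,n},C_m)=2n+3$) as a black box. If $\Delta(\overline{G})\ge 2n+3$, pick $v$ with $|N_{\overline{G}}(v)|\ge 2n+3$; the induced subgraph $G[N_{\overline{G}}(v)]$ is $\mathbb{K}_{2,n}$-free, so by Theorem~\ref{Our result} its complement $\overline{G}[N_{\overline{G}}(v)]$ contains $C_m$, and together with $v$ this is a $W_m$ in $\overline{G}$.

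Your proposal, by contrast, essentially re-derives (a version of) Theorem~\ref{Our result} inline using the machinery of Lemmas~\ref{cycle lemma 1}, \ref{cycle lemma 2}, \ref{Star-cycle} and Corollary~\ref{intersection lemma 2}. That is a legitimate route in principle --- it is roughly how Theorem~\ref{Our result} itself is proved --- but here it is far more work than the paper needs, and it imports all the delicate bookkeeping you flag in your final paragraph into what should be a one-line reduction. The value of the paper's approach is precisely that all of that bookkeeping has been packaged once and for all into Theorem~\ref{Our result}.

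Your sketch also has a genuine gap at the decomposition step. After applying Lemma~\ref{Star-cycle} to delete a set $U$ and split $\overline{G}[Q]$ into vertex-disjoint $2$-connected pieces, you assert that ``an odd cycle of $\overline{G}[Q]$ lies inside one non-bipartite block $C^\ast$''. This does not follow: the odd cycle witnessing non-bipartiteness of $\overline{G}[Q]$ may pass through the deleted vertices $U$, so all of the resulting $2$-connected pieces could, a priori, be bipartite. In the body of the paper (for the main theorem, in the range where $|V(\overline{H})|$ is larger) this exact issue is handled by a separate argument showing directly that a specific component is non-bipartite; your sketch would need an analogous step, and this is not automatic.
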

\begin{proof}
If not, then let $\Delta(\overline{G})\geq 2n+3$. This implies that there exists a vertex $v\in V(G)$ such that $|N_{\overline{G}}(v)|\geq 2n+3$. Since $G$ is $\mathbb{K}_{2,n}$-free, we have $G[N_{\overline{G}}(v)]$ is $\mathbb{K}_{2,n}$-free. Hence, from Theorem~\ref{Our result}, it follows that the graph $\overline{G}[N_{\overline{G}}(v)]$ contains a copy of $C_{m}$. This implies the graph $\overline{G}$ contains a copy of $C_{m}$. This $C_{m}$, along with the vertex $v$, forms a copy of a wheel $W_{m}$, which is a contradiction. This contradicts our assumption that $\overline{G}$ is $W_{m}-$free. 
\end{proof}

Using~\eqref{nbd complementary dependency}, $\Delta(\overline{G})+\delta(G)=|V(G)|-1=3(n+1)$. Thus  using Lemma~\ref{upper bound Del(barG)}, it follows that $\delta(G)\geq (n+1)$. To establish the upper bound of $\delta(G)$, we use the first moment method.

\begin{lemma}\label{upper bound del(G)}
$\delta(G)< \sqrt{3}(n+1)$.
\end{lemma}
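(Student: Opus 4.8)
The plan is to prove the contrapositive spirit of the statement directly via the first moment method, exactly as flagged in the introduction: if $\delta(G)$ were too large, then $G$ would be forced to contain a copy of $\mathbb{K}_{2,n}$, contradicting the standing assumption that $G$ is $\mathbb{K}_{2,n}$-free. So I would suppose for contradiction that $\delta(G)\geq \sqrt{3}(n+1)$ and aim to produce two vertices sharing at least $n$ common neighbours, which is precisely a $\mathbb{K}_{2,n}$.

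The natural tool is Corollary~\ref{intersection lemma 2}, applied with $A=B=V(G)$, so that every vertex $a\in A$ satisfies $|N_G(a)|\geq \delta(G)\geq d:=\sqrt{3}(n+1)$. The corollary then yields two vertices $x,y\in V(G)$ with
\begin{equation*}
|N_G(x)\cap N_G(y)| > \frac{d^2}{|B|}-\frac{d}{|A|}.
\end{equation*}
Here $|A|=|B|=|V(G)|=3n+4$. Substituting $d=\sqrt{3}(n+1)$ gives
\begin{equation*}
\frac{d^2}{|B|}-\frac{d}{|A|}=\frac{3(n+1)^2}{3n+4}-\frac{\sqrt{3}(n+1)}{3n+4},
\end{equation*}
and the core of the argument is the elementary estimate that this quantity is at least $n-1$ for all sufficiently large $n$. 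Indeed, $\frac{3(n+1)^2}{3n+4}=\frac{3n^2+6n+3}{3n+4}=n+\frac{2n+3}{3n+4}$, which exceeds $n$, while the subtracted term $\frac{\sqrt{3}(n+1)}{3n+4}$ tends to $\frac{\sqrt{3}}{3}<1$; a short computation confirms the difference is strictly greater than $n-1$ once $n$ is large. Hence $|N_G(x)\cap N_G(y)|\geq n$, and the two vertices $x,y$ together with $n$ of their common neighbours form a copy of $\mathbb{K}_{2,n}$ inside $G$.

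This contradicts the $\mathbb{K}_{2,n}$-freeness of $G$, forcing $\delta(G)<\sqrt{3}(n+1)$ and completing the proof. I do not expect a genuine obstacle here: the probabilistic content is entirely packaged in Corollary~\ref{intersection lemma 2}, so what remains is bookkeeping. The only point requiring mild care is the arithmetic inequality $\frac{d^2}{|B|}-\frac{d}{|A|}\geq n-1$, where one must choose the bound $n-1$ (rather than $n$) so that the integer count of guaranteed common neighbours is at least $n$; equivalently one verifies that the real-valued lower bound strictly exceeds $n-1$, so its ceiling is at least $n$. Tracking whether one needs $n$ or $n-1$ common neighbours to embed $\mathbb{K}_{2,n}$ — and ensuring the largeness hypothesis on $n$ absorbs the $O(1)$ correction terms — is the single place where a sign or off-by-one slip could occur, so I would state the threshold on $n$ explicitly and defer to the global largeness assumption recorded in the remark.
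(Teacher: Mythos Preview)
Your proposal is correct and follows essentially the same route as the paper: assume $\delta(G)\geq\sqrt{3}(n+1)$ and use the first moment method (packaged for you in Corollary~\ref{intersection lemma 2}, redone from scratch in the paper) to find $x,y$ with $|N_G(x)\cap N_G(y)|\geq n$, contradicting $\mathbb{K}_{2,n}$-freeness. One minor sharpening: your own computation already gives $\frac{d^2}{|B|}-\frac{d}{|A|}=n+\frac{(2-\sqrt{3})n+(3-\sqrt{3})}{3n+4}>n$ for \emph{all} $n\geq 0$, so there is no need to aim only for $>n-1$ or to invoke any largeness hypothesis here; the paper likewise obtains $\Ex[X]\geq n$ directly.
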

\begin{proof}
If possible, let $\delta(G)\geq \sqrt{3}(n+1)$. We show using the first moment method that there exist $x,y\in V(G)$ such that $|N_{G}(x)\cap N_{G}(y)|\geq n$. In contrast to Lemma~\ref{intersection lemma}, the assumptions in the present setting differ, thereby warranting a reformulation of the argument; nonetheless, the underlying principle remains identical. We perform the same random experiment of choosing two vertices one-by-one without replacement. This yields the sample space 
\begin{equation*}
\Omega= \{(x,y): x,y\in V(G), x\neq y\}.
\end{equation*}
From the construction, it follows that $|\Omega|=|V(G)|(|V(G)|-1)=(3n+4)(3n+3)$. Here we consider the probability space $(\Omega, 2^{\Omega}, \Pr)$, where $\Pr(\{\omega\})=\frac{1}{|\Omega|}$ for each $\omega\in\Omega$. We construct a random variable $X: \Omega \rightarrow \mathbb{R}$, where for each $(x,y)\in\Omega$,
\begin{equation*}
X(x,y)= |N_{G}(x)\cap N_{G}(y)|.
\end{equation*}
For a vertex $v\in V$, we construct auxiliary random variable $X_{v}:\Omega\rightarrow\mathbb{R}$, where  for each $(x,y)\in\Omega$,
\begin{align*}
X_{v}(x,y)&=\left\{\begin{array}{lll}
			1& \textnormal{ if  }  v\in N_{G}(x)\cap N_{G}(y)\\
			0& \textnormal{ else }&  
		\end{array}\right..
\end{align*}
From the construction, it follows that $X=\underset{v\in V}{\sum}X_{v}$. Here, we compute the distribution of $X_{v}$, for each $v\in V$,
\begin{equation*}
\Pr(X_{v}=1)=\frac{|N_{G}(v)|(|N_{G}(v)|-1)}{|V(G)|(|V(G)|-1)}\geq \frac{\sqrt{3}(n+1)(\sqrt{3}(n+1)-1)}{(3n+4)(3n+3)}\geq \frac{n}{3n+4}.
\end{equation*}
Hence, using the linearity of expectation, we have

\begin{equation*}
\Ex[X]=\underset{v\in V}{\sum}\Ex[X_{v}]=\underset{v\in V}{\sum}\Pr(X_{v}=1)\geq \underset{v\in V}{\sum}\frac{n}{3n+4}=n.
\end{equation*}
This implies that there exists at least one pair $(x,y)\in\Omega$ such that $|N_{G}(x)\cap N_{G}(y)|=X(x,y)\geq n$, which contradicts \eqref{K_2n freeness dependency}.
\end{proof}

\begin{construction}
Using Lemma~\ref{upper bound Del(barG)} and Lemma~\ref{upper bound del(G)}, we conclude that 
\begin{equation*}
(3-\sqrt{3})(n+1)<\Delta(\overline{G})= (|V(G)|-1)-\delta(G)\leq2(n+1). 
\end{equation*}
We choose a vertex $v\in V(G)$ such that $|N_{\overline{G}}(v)|=\Delta(\overline{G})$ and construct two induced subgraphs namely, $\overline{H}=\overline{G}[N_{\overline{G}}(v)]$ and $H=G[N_{\overline{G}}(v)]$  of the complementary graph $\overline{G}$ and graph $G$ respectively. Thus vertex set of both graphs are same, namely $N_{\overline{G}}(v)$ and its size,
\begin{equation*}
 |N_{\overline{G}}(v)|=|V(\overline{H})|=|V(H)|\in[(3-\sqrt{3})(n+1) , 2(n+1)]\cap\mathbb{Z}.   
\end{equation*}    
\end{construction}

\begin{lemma}\label{maximum bipartition}
Let $\Delta(\overline{G})= \frac{3(n+1)}{2}+t$, where $t\in[-\frac{(2\sqrt{3}-3)(n+1)}{2},\frac{n+1}{2}]$, and for $a\in V(\overline{G})$, $S_{a}\subseteq N_{\overline{G}}(a)$ be a non-empty set such that the induced graph  $\overline{G}[S_{a}]$ forms a bipartite subgraph of $\overline{G}$.  If $A_{a}\sqcup B_{a}=S_{a}$ denotes the bipartition, where $|A_{a}|\geq |B_{a}|$, then $|A_{a}|<\frac{3(n+1)}{4}+\frac{3t}{2}+\frac{3}{4}$.
\end{lemma}
\begin{proof}
If possible for such $a$, let $|A_{a}|= \frac{3(n+1)}{4}+\varepsilon$. Using \eqref{nbd dependency}, for each $w\in N_{\overline{G}}(a)$, we have the equality  $N_{\overline{G}}(w)=N_{\overline{G}[V(\overline{G})-\{a\}]}(w)\sqcup\{a\}$, that leads to
\begin{equation*}
\Delta(\overline{G})\geq|N_{\overline{G}}(w)|=|N_{\overline{G}[V(\overline{G})-\{a\}]}(w)|+1.   
\end{equation*}
Here, we consider the bipartition of 
$V(\overline{G})-\{a\}= A_{a}\sqcup C$, where $C=V(\overline{G})-(A_{a}\sqcup\{a\})$. Then $|C|=3(n+1)+1-|A_{a}|-1=\frac{9(n+1)}{4}-\varepsilon$. In particular, for each $w\in A_{a}\subset S_{a}\subset N_{\overline{G}}(a)$, 
\begin{equation*}
|N_{\overline{G}[V(\overline{G})-\{a\}]}(w)|\leq \Delta(\overline{G})-1= \frac{3(n+1)}{2}+t-1.    
\end{equation*}
Using \eqref{nbd complementary dependency}, we have
\begin{equation*}
|N_{G[V(\overline{G})-\{a\}]}(w)|\geq ((|V(\overline{G})|-1)-1)-\frac{3(n+1)}{2}-t+1=\frac{3(n+1)}{2}-t.  \end{equation*}
Since $G[A_{a}]$ is a complete subgraph of $G$, $|N_{G}(w)\cap A_{a}|=|A_{a}|-1$ and hence 
\begin{equation*}
|N_{G}(w)\cap C|\geq \frac{3(n+1)}{2}-t-(|A_{a}|-1)=\frac{3(n+1)}{4}-t-\varepsilon+1,
\end{equation*}
for each $w\in A_{a}$. We apply Lemma~\ref{intersection lemma} on the partition $A_{a}\sqcup C$ with $d=\frac{3(n+1)}{4}-t-\varepsilon+1$, and conclude that there exist two vertices $x,y\in A_{a}$ such that 
\begin{equation*}
|N_{G}(x)\cap N_{G}(y)\cap C|>\frac{d^{2}}{|C|}-\frac{d}{|A_{a}|}\geq \frac{n+1}{4}-\varepsilon
\end{equation*}

\noindent After plugging the values of $d$, $|A_{a}|$ and $|C|$ in the above expression and further computation shows that 
\begin{equation*}
\frac{d^{2}}{|C|}-\frac{d}{|A_{a}|}= \bigg(\frac{n+1}{4}-\varepsilon\bigg)+\frac{P(n,t,\varepsilon)}{Q(n,t,\varepsilon)},
\end{equation*}
where 
\begin{align*}
 P(n,t,\varepsilon)&=3(4\varepsilon-6t-3)(n+1)^2
+4[3(1-t-\varepsilon)^{2}+(6\varepsilon-9)(1-t-\varepsilon)+3\varepsilon+7\varepsilon^{2}](n+1)+\\
&\hspace{2cm}+4^{2}\varepsilon(1-t-\varepsilon)^{2}+4^{2}\varepsilon(1-t-\varepsilon)-4^{2}\varepsilon^{3}\\
&=3(4\varepsilon-6t-3)(n+1)^2+(3t^{2}+4\varepsilon^{2}+3t+12\varepsilon-6)(n+1)+16\varepsilon(2+t^{2}-3t-3\varepsilon+2t\varepsilon)\\
 Q(n,t,\varepsilon)&=\big(9(n+1)-4\varepsilon\big)\big(3(n+1)+4\varepsilon\big).
\end{align*}
If $\varepsilon\in\left[\frac{3t}{2}+\frac{3}{4},\frac{9(n+1)}{4}\right)$, then for each integer $n\geq1$, $\frac{P(n,t,\varepsilon)}{Q(n,t,\varepsilon)}\geq 0$. Consequently, we get from the above inequality:
\begin{equation*}
 |N_{G}(x)\cap N_{G}(y)\cap C|> \frac{(n+1)}{4}-\varepsilon.   
\end{equation*}
Since these two vertices $x$ and $y$ are adjacent to all the vertices in $A_{a}$ in the graph $G$, satisfies $|N_{G}(x)\cap N_{G}(y)\cap A_{a}|=(|A_{a}|-2)$. Thus we get $|N_{G}(x)\cap N_{G}(y)|\geq n$, which contradicts \eqref{K_2n freeness dependency}. Thus, $\mathbb{K}_{2,n}-$freeness property of $G$ implies $\varepsilon<\frac{3t}{2}+\frac{3}{4}$.
\end{proof}

Depending on the size of $V|(\overline{H})|$, we divide the proof into two cases. In the first case, where
\begin{equation*}
 |V(\overline{H})|\in\left[(3-\sqrt{3})(n+1), \frac{3(n+1)}{2}-1\right),   
\end{equation*}
we employ a probabilistic approach, using Lemma~\ref{intersection lemma}, to show that $\overline{G}[Q]$ forms a non-bipartite graph. In the second case, where 
\begin{equation*}
|V(\overline{H})|\in\left[\frac{3(n+1)}{2}-1, 2(n+1)\right], 
\end{equation*}
some structural rigidity phenomenon arises. These are handled in Lemma~\ref{nbd conservation 2}, together with an application of Lemma~\ref{intersection lemma}.

\subsection{The size of \texorpdfstring{$V(\overline{H})$}{$V(\overline{H})$} lies within the range \texorpdfstring{$[(3-\sqrt{3})(n+1), \frac{3(n+1)}{2}-1)\cap\mathbb{Z}$}{$[(3-\sqrt{3})(n+1), \frac{3(n+1)}{2}-1)\cap\mathbb{Z}$}} 

\paragraph{ Before proceeding, we prove the following lemma, where we show that $V(\overline{H})$ can be decomposed into two subsets, namely the null and dense sets. In particular, we show that the size of the null set is at most $1$.}
\begin{lemma}\label{almost one-tenth}
There exists at most one vertex $x\in V(\overline{H})$, such that $|N_{\overline{H}}(x)|<\frac{|V(\overline{H})|}{10}+1$, i.e. 
\begin{equation*}
 \left|\left\{x\in  V(\overline{H}): |N_{\overline{H}}(x)|<1+\frac{|V(\overline{H})|}{10}\right\}\right|\leq1.   
\end{equation*}
Equivalently, using~\eqref{nbd complementary dependency}, there exists at most one vertex $x\in V(H)$ such that $|N_{H}(x)|\geq \frac{9}{10}|V(H)|-1$.
\end{lemma}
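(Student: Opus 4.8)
The plan is to prove the reformulation in terms of $H$ and then deduce the $\overline{H}$-statement from it, arguing by contradiction via a one-line double count of common neighbourhoods. First I would record that $H$ and $\overline{H}$ are complementary graphs on the common vertex set $N_{\overline{G}}(v)$, so that $|N_H(x)| + |N_{\overline{H}}(x)| = |V(H)| - 1$ for every $x$. Hence $|N_{\overline{H}}(x)| < \frac{|V(H)|}{10} + 1$ is equivalent, up to the harmless integrality rounding, to $|N_H(x)| \geq \frac{9}{10}|V(H)| - 1$, and it suffices to show that at most one vertex has $H$-degree this large.

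Next, suppose for contradiction that there are two distinct vertices $x, y \in V(H)$ with $|N_H(x)|, |N_H(y)| \geq \frac{9}{10}|V(H)| - 1$. Since $H = G[N_{\overline{G}}(v)]$ is an induced subgraph of $G$, we have $N_H(x) = N_G(x) \cap V(H)$, so any common neighbour of $x$ and $y$ in $H$ is also one in $G$. A direct inclusion--exclusion inside $V(H)$ then gives
\begin{equation*}
|N_G(x) \cap N_G(y)| \geq |N_H(x) \cap N_H(y)| \geq |N_H(x)| + |N_H(y)| - |V(H)| \geq \tfrac{4}{5}|V(H)| - 2.
\end{equation*}
Invoking the lower bound $|V(H)| = \Delta(\overline{G}) > (3 - \sqrt{3})(n+1)$ obtained just above from Lemmas~\ref{upper bound Del(barG)} and~\ref{upper bound del(G)}, the right-hand side exceeds $n$ for large $n$, so $x$, $y$ together with $n$ of their common $G$-neighbours form a copy of $\mathbb{K}_{2,n}$ in $G$ --- contradicting the $\mathbb{K}_{2,n}$-freeness of $G$. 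Hence at most one such vertex can exist.

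The step I expect to carry the real weight is arithmetic rather than structural: the calibration of the constant $\frac{1}{10}$. The coefficient of $|V(H)|$ in the common-neighbourhood bound is $2\cdot\frac{9}{10} - 1 = \frac{4}{5}$, so the argument closes precisely when $\frac{4}{5}(3-\sqrt{3}) > 1$. Since $3 - \sqrt{3} \approx 1.268$, one has $\frac{4}{5}(3-\sqrt{3}) = \frac{12 - 4\sqrt{3}}{5} \approx 1.014$, clearing the threshold $1$ by only a thin margin; this is exactly why the null-set threshold $\frac{1}{10}$ is chosen, any value $\varepsilon < \frac{1}{2}\big(1 - \frac{1}{3-\sqrt{3}}\big) \approx 0.106$ working just as well. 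The remaining care is to confirm that the additive $-2$ loss in the common-neighbour estimate and the rounding in the first step are absorbed by the standing hypothesis that $n$ is sufficiently large, which forces the sharp conclusion $|N_G(x)\cap N_G(y)| \geq n$ rather than merely $> n - O(1)$.
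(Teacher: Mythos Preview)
Your proof is correct and follows essentially the same approach as the paper: assume two vertices of $H$-degree at least $\tfrac{9}{10}|V(H)|-1$, apply inclusion--exclusion inside $V(H)$ to force at least $\tfrac{4}{5}|V(H)|-2$ common neighbours, and use the standing bound $|V(H)|>(3-\sqrt{3})(n+1)$ to contradict $\mathbb{K}_{2,n}$-freeness. The paper phrases the final step as the impossible chain $(n+1)\geq\tfrac{4}{5}|V(H)|\geq\tfrac{4(3-\sqrt{3})}{5}(n+1)>(n+1)$, while you run it forward to $|N_G(x)\cap N_G(y)|\geq n$; these are the same inequality read in opposite directions, and your discussion of the calibration $\tfrac{4}{5}(3-\sqrt{3})>1$ matches the paper's own remark on the choice of $\tfrac{1}{10}$.
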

\begin{proof}
If not, then let there exist two vertices $x$ and $y\in V(H)$ such that $|N_{H}(x)|\geq\frac{9}{10}|V(H)|-1$ and $|N_{H}(y)|\geq\frac{9}{10}|V(H)|-1$. Since $G$ is $\mathbb{K}_{2,n}$ free, we have $H$ is $\mathbb{K}_{2,n}$ free. Consequently, \eqref{K_2n freeness dependency} holds for the induced graph $H$ and 
\begin{equation*}
\frac{9}{5}|V(H)|-2-(n-1)\leq|N_{H}(x)|+|N_{H}(y)|-|N_{H}(x)\cap N_{H}(y)|=|N_{H}(x)\cup N_{H}(y)|\leq|V(H)|  
\end{equation*}
From the above inequality, it follows that $(n+1)\geq\frac{4}{5}|V(H)|$. Since 
\begin{equation*}
|N_{\overline{G}}(v)|=|V(H)|=|V(\overline{H})|=\Delta(\overline{G})\geq(3-\sqrt{3})(n+1),    
\end{equation*}
we have $(n+1)\geq\frac{4(3-\sqrt{3)}}{5}(n+1)>(n+1)$, which is a contradiction.
\end{proof}

\begin{remark}
The choice of the quantities $\frac{|V(\overline{H})|}{10}+1$ and the fraction $\frac{1}{10}$ is essential, as this specific selection ultimately leads to a contradiction. In particular, under this choice, we are forced into the incorrect inequality 
\begin{equation*}
(n+1)\geq\frac{4(3-\sqrt{3)}}{5}(n+1)>(n+1).
\end{equation*}
This contradiction arises precisely because of the assumption involving the fraction $\frac{1}{10}$ and therefore justifies its use in our argument.
\end{remark}

We name the set $P=\left\{x\in  V(\overline{H}): |N_{\overline{H}}(x)|<\frac{|V(\overline{H})|}{10}+1\right\}$ constructed above as null set and its complement $Q=V(\overline{H})-P$ as dense set. From the above lemma, we have an immediate corollary.

\begin{corollary}\label{decomposition lemma}
The vertex set $V(\overline{H})=V(H)=N_{\overline{G}}(v)$ admits a decomposition $P\sqcup Q$, with $|P|\leq1$ and for each $x\in Q$, $|N_{\overline{H}}(x)|\geq\frac{|V(\overline{H})|}{10}+1$.
\end{corollary}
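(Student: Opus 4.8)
The plan is essentially to read the conclusion off from the preceding lemma, since the partition is already built into the definitions of $P$ and $Q$ given just before the statement. First I would record that, by the very construction $Q = V(\overline{H}) - P$, the two sets are disjoint and together cover $V(\overline{H})$, so the decomposition $V(\overline{H}) = P \sqcup Q$ holds automatically with no argument required. The bound $|P| \leq 1$ is then nothing more than a restatement of Lemma~\ref{almost one-tenth}, which asserts precisely that at most one vertex of $V(\overline{H})$ can have $\overline{H}$-degree below $\frac{|V(\overline{H})|}{10} + 1$.

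The only remaining point is the degree condition on $Q$, and this I would obtain purely from the logical negation of the defining property of $P$. A vertex $q$ lies in $Q$ exactly when $q \notin P$, which by the definition of $P$ means the strict inequality $|N_{\overline{H}}(q)| < \frac{|V(\overline{H})|}{10} + 1$ fails; equivalently $|N_{\overline{H}}(q)| \geq \frac{|V(\overline{H})|}{10} + 1$. Collecting these three observations finishes the argument.

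I do not anticipate any genuine obstacle here: the corollary is a repackaging of Lemma~\ref{almost one-tenth} in the language of the null/dense decomposition, and all the substantive work---namely invoking the $\mathbb{K}_{2,n}$-freeness of $H$ together with the lower bound $|V(H)| = \Delta(\overline{G}) \geq (3-\sqrt{3})(n+1)$ to force the two-low-degree-vertices assumption into the impossible inequality $(n+1) \geq \frac{4(3-\sqrt{3})}{5}(n+1)$---has already been carried out in the proof of that lemma. In this sense the corollary carries no independent difficulty; its role is purely organizational, fixing the notation $P$ for the (at most singleton) null set and $Q$ for the dense set, which is the form in which the decomposition will be fed into Lemma~\ref{Star-cycle} and Lemma~\ref{cycle lemma 1} later in the argument.
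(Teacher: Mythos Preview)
Your proposal is correct and matches the paper's approach exactly: the paper presents this corollary with no separate proof, simply stating it as ``an immediate corollary'' of Lemma~\ref{almost one-tenth} after introducing the sets $P$ and $Q$. You have simply unpacked what ``immediate'' means here, and your three observations (the definitional partition, the cardinality bound from the lemma, and the degree condition by negation of membership in $P$) are precisely the content.
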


From this point onward, to conclude the argument for this range of $|V(\overline{H})|$, our aim is to establish the existence of an odd cycle $C_{m}$ inside $\overline{G}[Q]$. Since $m$ is an odd integer, one of the sufficient conditions of Lemma~\ref{cycle lemma 1} requires us to show that $\overline{G}[Q]$ forms a non-bipartite graph.  

\begin{lemma}\label{nbd conservation}
Within this range, along with the vertex set decomposition $P\sqcup Q=V(\overline{H})$ with $|P|\leq 1$, the induced subgraph $\overline{G}[Q]$ forms a non-bipartite subgraph of $\overline{G}$. 
\end{lemma}
\begin{proof}
 Let $\Delta(\overline{G})=|V(\overline{H})|=\frac{3(n+1)}{2}-\varepsilon$, where $\varepsilon\in\left(1,\frac{(2\sqrt{3}-3)(n+1)}{2}\right]$. In practice, since $|V(\overline{H})|$ is a positive integer, we have  $\varepsilon\geq\frac{3}{2}$.  
If possible  $\overline{G}[Q]$ forms a bipartite graph with bipartition $A\sqcup B$ with $|A|\geq |B|$, then 
\begin{equation*}
|A|\geq \frac{|Q|}{2}=\frac{|V(\overline{H})|-|P|}{2}\geq\frac{3(n+1)}{4}-\frac{\varepsilon+1}{2}.    
\end{equation*}
Using Lemma~\ref{maximum bipartition}, we have $|A|<\frac{3(n+1)}{4}- \frac{3\varepsilon}{2}+\frac{3}{4}$. Thus for each $\varepsilon\in\left[\frac{3}{2},\frac{(2\sqrt{3}-3)(n+1)}{2}\right]$, we have  $\varepsilon\geq\frac{3}{2}>\frac{5}{4}$ and consequently,
\begin{equation*}
 |A|<\frac{3(n+1)}{4}- \frac{3\varepsilon}{2}+\frac{3}{4}=\frac{3(n+1)}{4}- \frac{6\varepsilon-3}{4}< \frac{3(n+1)}{4}-\frac{\varepsilon+1}{2}\leq |A|
\end{equation*}
i.e. $|A<|A|$,  which is a contradiction.
\end{proof}

Recall that Corollary~\ref{decomposition lemma} provides us with a decomposition $P\sqcup Q$ of $V(\overline{H})$ where $|P|\in\{0,1\}$. For both cases, we have shown that $\overline{G}[Q]$ is non-bipartite. Our plan is to use the \emph{connectivity argument} to conclude the existence of a copy of an odd cycle $C_{m}$ inside $\overline{G}[Q]$. 

\begin{lemma}\label{connectivity-first range}
Within this range, along with the vertex set decomposition $P\sqcup Q=V(\overline{H})$. Then the connectivity parameter of $\overline{G}[Q]$ satisfies $\kappa(\overline{G}[Q])\geq2$.
\end{lemma}
\begin{proof}
If not, we assume $\kappa(\overline{G}[Q])\in\{0,1\}$ and 
construct	
\begin{equation*}
	R=\begin{cases}
		\emptyset &\text{ if } \kappa(\overline{G}[Q])=0\\
		\{q\}&\text{ if } \kappa(\overline{G}[Q])=1\text{ i.e. else},
	\end{cases}
\end{equation*}	
where, in the second case, $q$ is a \emph{cut-vertex} of $\overline{G}[Q]$. By construction, the induced subgraph  $\overline{G}[Q-R]$ is not connected.  

On $Q-R$, we construct the relation $\sim$ by declaring $x\sim y$ if and only if there is a path in $\overline{G}[Q-R]$ from 
$x$ to $y$. (We allow the  $2-$tuples $(x,x)$ as paths, where $x\in Q-R$, for the reflexive property.) This is an equivalence relation, and its equivalent classes are precisely the maximal connected components of $\overline{G}[Q-R]$. We call these classes $S_{1},\ldots,S_{m}$, where $m\in[2,9]\cap\mathbb{Z}$, so that 
\begin{equation*}
Q=R\sqcup S_{1}\sqcup\ldots\sqcup S_{m}.
\end{equation*}
Due to the maximal property, for each  $S,S'\in\{S_{1},\ldots,S_{m}\}$ with $S\neq S'$, there is no edge of the form $\{s,s'\}$, where $s\in S$ and $s'\in S'$. Due to these properties, the neighbourhood relation \eqref{nbd dependency} gets a revision, that is, for each $x\in S$ , where $S\in\{S_{1},\ldots,S_{m}\}$
\begin{equation*}
 N_{\overline{G}[S]}(x)\subset N_{\overline{G}[Q]}(x)\subset N_{\overline{G}[S]}(x)\sqcup R.    
\end{equation*}
Since each $S\in\{S_{1},\ldots,S_{m}\}$ forms a connected component, thus we have 
\begin{equation*}
 \frac{V(\overline{H})}{10}+1\leq\delta(\overline{G}[Q])\leq\delta(\overline{G}[S])\leq|S|-1.   
\end{equation*}
This means $|S|\geq2+\frac{V(\overline{H})}{10}$ for each $S\in\{S_{1},\ldots,S_{m}\}$. Since 
\begin{equation*}
|V(\overline{H})|-|P|-|R|=|Q|-|R|=|S_{1}|+\ldots+|S_{m}|\geq2m+m\frac{V(\overline{H})}{10}    
\end{equation*}
holds we have $m\leq9$. 

We choose $B\in\{S_{1},\ldots,S_{m}\}$ such that $|B|=\min\{|S_{1}|,\ldots,|S_{m}|\}$ and construct the set 
\begin{equation*}
 A=(S_{1}\sqcup\ldots\sqcup S_{m})-B.   
\end{equation*}

\noindent{\textsl{Claim} :}  $|A|\in\left[\frac{|V(\overline{H})|}{2}-1,n-1\right]\cap\mathbb{Z}$.
\begin{proof}[\tt{Proof of the claim} :]\renewcommand{\qedsymbol}{}
Here $|A|\geq|B|$. Consequently, $|A|\geq\frac{|Q|-|R|}{2}=\frac{|V(\overline{H})|-|P|-|R|}{2}\geq|B|$. Thus using $|P|\leq1$ and $|R|\leq1$, we have $|A|\geq\frac{|V(\overline{H})|}{2}-1$. We also have $|A|\leq n-1$. If not, let $|A|\geq n$. In the complementary graph of $\overline{G}$, i.e. in $G$, for each $a\in A$ and $b\in B$, $\{a,b\}$ forms an edge in $G$. We choose two vertices $b,b'\in B$, then $N_{G}(b)\cap A=A=N_{G}(b')\cap A$. This implies 
\begin{equation*}
 |N_{G}(b)\cap N_{G}(b')|\geq|N_{G}(b)\cap N_{G}(b')\cap A|=|A|\geq n,   
\end{equation*}
which contradicts \eqref{K_2n freeness dependency} and establishes the claim. 
\end{proof}

We  consider the vertex partition $A\sqcup C\sqcup\{v\}=V(G)$ and construct the induced subgraphs 
\begin{equation*}
 G[A\sqcup C],\quad\overline{G}[A\sqcup C]\quad\text{ and }\quad G[C].   
\end{equation*}
Here $B\subset C$.  For a vertex $w\in B$, since $v$ is the maximum degree vertex in $\overline{G}$ and $w$ is adjacent to $v$ in $\overline{G}$, using \eqref{nbd dependency}, we have 
\begin{equation*}
 N_{\overline{G}}(w)=N_{\overline{G}[A\sqcup C]}(w)\sqcup\{v\}.   
\end{equation*}
This implies, for each $w\in B$,
\begin{equation*}
|N_{\overline{G}[A\sqcup C]}(w)|\leq\Delta(\overline{G})-1=|V(\overline{H})|-1.
\end{equation*}
Using \eqref{nbd complementary dependency} and Lemma~\ref{upper bound Del(barG)}, we have the following:- 
\begin{align*}
|N_{G[A\sqcup C]}(w)|&=(|A|+|C|)-1-|N_{\overline{G}[A\sqcup C]}(w)|\\
&=(|V(G)|-1)-1-|N_{\overline{G}[A\sqcup C]}(w)|\\
&\geq |V(G)|-|V(\overline{H})|-1.
\end{align*}
Revising \eqref{nbd dependency}, we have $N_{G[A\sqcup C]}(b)=N_{G[A]}(b)\sqcup N_{G[C]}(b)$ for each $b\in B$. Therefore for each $b\in B$,
\begin{align*}
|N_{G[C]}(b)|=|N_{G[A\sqcup C]}(b)|-|N_{G[A]}(b)|\geq &|N_{G[A\sqcup C]}(b)|-|A|\\
&\geq |V(G)|-|V(\overline{H})|-|A|-1=|C|-|V(\overline{H})|.   
\end{align*}
Using Corollary~\ref{intersection lemma 2} over the graph $G[C]$, with $d=|C|-|V(\overline{H})|$ and $B\subset C$,  we conclude that there exist two distinct $b,b'\in B$ such that
\begin{equation*}
|N_{G[C]}(b)\cap N_{G[C]}(b')|>\frac{d^{2}}{|C|}-\frac{d}{|B|}\geq n-1-|A|+\frac{P(n,\varepsilon)}{Q(n,\varepsilon)}\geq n-1-|A|,    
\end{equation*}
where 
\begin{align*}
P(n,\varepsilon)&=(15+12\varepsilon)(n+1)^{2}+(16\varepsilon^{2}-36\varepsilon+32)(n+1)-4(4\varepsilon^{3}+\varepsilon^{2}-4)\\
Q(n,\varepsilon)&=[9(n+1)+(4+2\varepsilon)][3(n+1)+(4-2\varepsilon)]. 
\end{align*}
For each $n\geq 1$ and $\varepsilon\in\left(1,\frac{(2\sqrt{3}-3)(n+1)}{2}\right]$, $\frac{P(n,\varepsilon)}{Q(n,\varepsilon)}\geq0$, concludes the last inequality. While estimating the quantity $\frac{d^{2}}{|C|}-\frac{d}{|B|}$, we have used the following inequalities:- 
\begin{align*}
\frac{|V(\overline{H})|^{2}}{|C|}\geq&\frac{|V(\overline{H})|^{2}}{|V(G)|-\frac{|V(\overline{H})|}{2}}=\frac{[3(n+1)-2\varepsilon]^{2}}{9(n+1)+(4+2\varepsilon)},\\
\frac{|V(\overline{H})|-|C|}{|B|}\geq&\frac{\frac{3}{2}|V(\overline{H})|-|V(G)|}{\frac{|V(\overline{H})|}{2}+1}=3-\frac{|V(G)|+3}{\frac{|V(\overline{H})|}{2}+1}=3-\frac{12(n+1)+16}{3(n+1)+(4-2\varepsilon)}.
\end{align*}
Here $N_{G}(b)\cap N_{G}(b')\cap A=A$, $v\notin N_{G}(b)\cap N_{G}(b')$, $C\subset V(G)$ and $G[C]$ forms a subgraph of $G$, these imply
for two distinct vertices $b,b'\in B$,
\begin{align*}
|N_{G}(b)\cap N_{G}(b')|=&|N_{G}(b)\cap N_{G}(b')\cap A|+|N_{G}(b)\cap N_{G}(b')\cap C|\\
=&|N_{G}(b)\cap N_{G}(b')\cap A|+|N_{G[C]}(b)\cap N_{G[C]}(b')|\\
&\geq |A|+(n-1)-|A|+1=n.
\end{align*}
This contradicts \eqref{K_2n freeness dependency}.
\end{proof}

\begin{construction}\label{construction}
Here we construct the choice of an integer $n\geq1$ and an odd integer $m\geq1$ with $n\geq4m$. Our straightforward intuition is that one of its connected components of the subgraph $\overline{G}[Q]$ contains an odd cycle. Thus, our motive is to use  Lemma~\ref{cycle lemma 1}.  Jotting together Corollary~\ref{decomposition lemma} and  Lemma~\ref{nbd conservation}, we have $V(\overline{H})=P\sqcup Q$, with $|P|\leq1$ and the induced subgraph  $\overline{G}[Q]$ forms a non-bipartite graph with vertex set $Q$,
\begin{equation}\label{sizeQ}
|Q|= |V(\overline{H})|-|P|\geq|V(\overline{H})|-1
\end{equation}
Using the neighbourhood relation \eqref{nbd dependency} we have $\delta(\overline{G}[Q])\geq\delta(\overline{H})-|P|\geq\delta(\overline{H})-1$. Using the property of $Q$, we also have $\delta(\overline{G}[Q])\geq\frac{|V(\overline{H})|}{10}$. Hence 
\begin{equation}\label{min deg Q}
\delta(\overline{G}[Q])\geq\max\left\{\delta(\overline{H})-1,\frac{|V(\overline{H})|}{10}\right\}\geq\frac{|V(\overline{H})|}{10}\geq\frac{|Q|}{10}.
\end{equation}
The inequality \eqref{min deg Q} motivates choosing the preassigned number $\varepsilon$ to be equal to $\frac{1}{10}$. In addition to \eqref{min deg Q}, if the induced subgraph $\overline{G}[Q]$ satisfies the connectivity condition $\kappa(\overline{G}[Q])\geq2$, then Lemma~\ref{cycle lemma 1} becomes applicable. In that case, together with \eqref{sizeQ} and  \eqref{min deg Q}, plugging $r=\frac{|Q|}{10}$, in Lemma~\ref{cycle lemma 2} we conclude
\begin{equation}\label{odd circumferrence cicle}
\mathop{oc}(\overline{G}[Q])> \frac{2(|V(\overline{H})|-1)}{10}-1=\frac{|V(\overline{H})|}{5}-\frac{4}{5}.
\end{equation} 
The above bound \eqref{odd circumferrence cicle} motivates us to choose $m$ accordingly. Using Lemma~\ref{cycle lemma 2} we choose a positive real number $K=K(\frac{1}{10})$. Depending on such $K$, we construct the positive real number $45K10^{4}$. Primarily, we choose the least integer $N\geq1$ satisfying
\begin{equation}\label{large integer construction}
(3-\sqrt{3})(N+1)\geq 45K10^{4}.
\end{equation}
For each graph $\Gamma$ with $n$ vertices, where $n\geq N$  and  $\delta(\Gamma)>\frac{n}{10}$, contains an odd cycle $C_{m}$ for each odd integer
\begin{equation*}
m\in[K, oc(\Gamma)-K].
\end{equation*}
Secondarily, we choose such an odd integer $m$. Motivating from \eqref{odd circumferrence cicle}, we have
\begin{equation*}
A(n)=\frac{3-\sqrt{3}}{5}(n+1)-1=\frac{(3-\sqrt{3})(n+1)-1}{5}-\frac{4}{5}\leq\frac{|V(\overline{H})|}{5}-\frac{4}{5}.    
\end{equation*}
Since $K\leq \frac{5}{7-4\sqrt{3}}K+\frac{3+\sqrt3}{7-4\sqrt{3}}=B(K)$ holds, we have 
\begin{equation*}
\left[B(K),A(n)-K\right]\subset\left[K,A(n)-K\right]\subset\left[K, \mathop{oc(\overline{G}[Q])}-K\right]
\end{equation*}
Here we observe the following:-

\noindent{\textsl{Claim} :}  If $n\geq 4m$, then $m\geq B(K)$ if and only if $m\leq A(n)-K$. 
\begin{proof}[\tt{Proof of the claim} :]\renewcommand{\qedsymbol}{}
Using $n\geq 4m$, we estimate,
\begin{align*}
A(n)-K=\left(\frac{3-\sqrt{3}}{5}\right)n+\frac{3-\sqrt{3}}{5}-1-K\geq&\left(\frac{3-\sqrt{3}}{5}\right)4m+\frac{3-\sqrt{3}}{5}-1-K\notag\\
&=m+\left(\frac{7-4\sqrt{3}}{5}\right)m-\frac{2+\sqrt{3}}{5}-K
\end{align*}
Then 
$A(n)-K\geq m+\left(\frac{7-4\sqrt{3}}{5}\right)m-\frac{2+\sqrt{3}}{5}-K\geq m$ if and only if 
\begin{align*}
\left(\frac{7-4\sqrt{3}}{5}\right)m-\frac{2+\sqrt{3}}{5}-K\geq0\Leftrightarrow \frac{7-4\sqrt{3}}{5}m\geq K+\frac{2+\sqrt{3}}{5}\Leftrightarrow m\geq B(K).
\end{align*}
Hence, the claim is established. 
\end{proof}	
\noindent Using the above claim, if we choose $m\geq B(K)$ and $n\geq 4m$, then $m\in\left[B(K),A(n)-K\right]$. Consequently, $m\in\left[K, \mathop{oc(\overline{G}[Q])}-K\right]$. This concludes the construction of the parameters $n$ and $m$.
\end{construction}

 Therefore the non-bipartite graph $\overline{G}[Q]$, satisfies $\kappa(\overline{G}[Q])\geq 2$ and $\delta(\overline{G}[Q])\geq \frac{|Q|}{10}$. Using Lemma~\ref{cycle lemma 1} for $r=\frac{|Q|}{10}$ and following \eqref{odd circumferrence cicle}, we conclude the graph $\overline{G}[Q]$ contains an odd cycle of length at least $\frac{|V(\overline{H})|}{5}-\frac{4}{5}$. Following the consequence as described after \eqref{odd circumferrence cicle}, we take 
\begin{equation*}
 m\in\left[K, \mathop{oc(\overline{G}[Q])}-K\right].   
\end{equation*}
Finally, using Lemma~\ref{cycle lemma 2}, we conclude that there exists a copy of an odd cycle $C_{m}$ in $\overline{G}[Q]$. Such $C_{m}$ together with the vertex $v$ forms a copy of a wheel $W_{m}$ in $\overline{G}$, which is a contradiction. This completes the study if $|V(\overline{H})|\in [(3-\sqrt{3})(n+1), \frac{3(n+1)}{2}-1)$.

\begin{remark}
The choice
\begin{equation*}
m\geq\begin{cases}
\frac{5K}{7-4\sqrt{3}}+\frac{2+\sqrt3}{7-4\sqrt{3}}&\text{ if } P=\emptyset\\
\frac{5K}{7-4\sqrt{3}}+\frac{3+\sqrt3}{7-4\sqrt{3}}=B(K)&\text{ if } |P|=1.
\end{cases}    
\end{equation*}
Thus, the choice of $m\geq B(K)$ is sufficient throughout the proof of Theorem~\ref{main theorem}.
\end{remark}

\subsection{The size of \texorpdfstring{$V(\overline{H})$}{$V(\overline{H})$} lies within the  range \texorpdfstring{$\left[\frac{3(n+1)}{2}-1, 2(n+1)\right]\cap\mathbb{Z}$}{$\left[\frac{3(n+1)}{2}-1, 2(n+1)\right]\cap\mathbb{Z}$}}

\paragraph{We first show that $V(\overline{H})$ obtains a similar decomposition as Corollary~\ref{decomposition lemma}. We first prove a general decomposition lemma, which we use later in our proof.}

\begin{lemma}\label{general decomposition lemma}
Within this range, for each $a\in V(\overline{G})$ with $|N_{\overline{G}}(a)|\geq\frac{3(n+1)}{2}-1$, the set $N_{\overline{G}}(a)$   admits a decomposition $P_{a}\sqcup Q_{a}=N_{\overline{G}}(a)$, where $|P_{a}|\leq 1$ and
\begin{equation*}
 P_{a}=\left\{x\in N_{\overline{G}}(a):|N_{\overline{G}}(x)\cap N_{\overline{G}}(a)|<\frac{|N_{\overline{G}}(a)|}{6}-\frac{1}{3}\right\}.   
\end{equation*}
\end{lemma}
\begin{proof}
If possible let for some $a\in V(\overline{G})$ with $|N_{\overline{G}}(a)|\geq\frac{3(n+1)}{2}-1$ satisfy $|P_{a}|\geq2$. This implies there exist two vertices $x$ and $y\in N_{\overline{G}}(a)$ such that for $u\in\{x,y\}$
\begin{equation*}
 |N_{\overline{G}[N_{\overline{G}}(a)]}(u)|=|N_{\overline{G}}(u)\cap N_{\overline{G}}(a)|>\frac{|N_{\overline{G}}(a)|}{6}-\frac{1}{3},  \end{equation*}
and using~\eqref{nbd complementary dependency} for the induced graph $G[N_{\overline{G}}(a)]$ we have, 
\begin{equation*}
 |N_{G[N_{\overline{G}}(a)]}(u)|<\frac{5|N_{\overline{G}}(a)|}{6}-1+\frac{1}{3}.
 \end{equation*}

Since $G$ is $\mathbb{K}_{2,n}$ free, we have the induced graph $G[N_{\overline{G}}(a)]$ is $\mathbb{K}_{2,n}$ free. Consequently, \eqref{K_2n freeness dependency} holds for the induced graph $G[N_{\overline{G}}(a)]$ and therefore 
\begin{align*}
|N_{\overline{G}}(a)|&\leq|\left(N_{G}(x)\cup N_{G}(y)\right)\cap N_{\overline{G}}(a)|=|N_{G[N_{\overline{G}}(a)]}(x)\cup N_{G[N_{\overline{G}}(a)]}(y)|\\
&=|N_{G[N_{\overline{G}}(a)]}(x)|+|N_{G[N_{\overline{G}}(a)]}(y)|-|N_{G[N_{\overline{G}}(a)]}(x)\cap N_{G[N_{\overline{G}}(a)]}(y)|\\
&<\frac{5|N_{\overline{G}}(a)|}{3}-2+\frac{2}{3}-(n-1)\leq\frac{3(n+1)}{2}-1\leq|N_{\overline{G}}(a)|. 
\end{align*}
That is $|N_{\overline{G}}(a)|<|N_{\overline{G}}(a)|$ holds, which yields a contradiction.      
 \end{proof}

\begin{corollary}\label{decomposition lemma 2}
Within this range,  $V(\overline{H})$ admits a decomposition $P\sqcup Q=V(\overline{H})$, with $|P|\leq1$ and 
\begin{equation*}
P=\left\{x\in V(\overline{H}): |N_{\overline{H}}(x)|<\frac{|V(\overline{H})|}{6}-\frac{1}{3}\right\}\\
\end{equation*}
\end{corollary}
\begin{proof}
Using Lemma~\ref{upper bound Del(barG)}, we have the vertex $v$ satisfies $|N_{\overline{G}}(v)|=|V(\overline{H})|\geq\frac{3(n+1)}{2}-1$ for each integer $n\geq1$. Thus Lemma~\ref{general decomposition lemma} holds as a particular case.   
\end{proof}

\noindent In the next lemma, we show that $\overline{G}[Q]$ is non-bipartite for each $|P|\in \{0,1\}$. Then we use that non-bipartite property of $\overline{G}[Q]$ to conclude our proof. With respect to this decomposition, note that 
\begin{equation*}
\delta(\overline{G}[Q])\geq\begin{cases}
\frac{|V(\overline{H})|}{6}-\frac{1}{3}\geq \frac{n+1}{4}-\frac{1}{2}&\text{ if } |P|=0\text{ or }\overline{G}[Q]=\overline{H}\\
\frac{|V(\overline{H})|}{6}-\frac{1}{3}-1\geq \frac{n+1}{4}-\frac{3}{2} &\text{ if } |P|=1\text{ using }\eqref{nbd dependency}.
\end{cases}   
\end{equation*}
Thus, in any case $\delta(\overline{G}[Q])\geq\frac{n+1}{4}-\frac{3}{2}$.

\begin{lemma}\label{nbd conservation 2}
Within this range, along with the vertex set decomposition $P\sqcup Q=V(\overline{H})$, if for each $x\in Q'\subset Q$, where 
\begin{equation*}
Q'=\left\{x\in Q:|N_{\overline{G}}(x)|\geq\frac{3}{2}(n+1)-1\right\},
\end{equation*}
the induced subgraph  $\overline{G}[Q_{x}]$ forms a bipartite graph, then $\overline{G}[Q]$ forms a non-bipartite subgraph of the graph $\overline{G}$.
\end{lemma}
\begin{proof}
From Corollary~\ref{decomposition lemma 2}, we have a decomposition of $V(\overline{H})=P\sqcup Q$, where $|P|\leq 1$. Hence we assume $|Q|=\frac{3(n+1)}{2}+t$ for some $t\in [-2, \frac{n+1}{2}]$. The lower bound of $t$ follows since $|Q|\geq |V(\overline{H})|-1$. If possible, let $\overline{G}[Q]$ form a bipartite graph with bipartition $A\sqcup B=Q$, where $|A|\geq |B|$. Hence, $|A|\geq \frac{3(n+1)}{4}+\frac{t}{2}=\frac{|Q|}{2}\geq|B|$. So let 
\begin{align*}
|A|&=\frac{3(n+1)}{4}+\frac{t}{2}+\rho\textup{ and }|B|=\frac{3(n+1)}{4}+\frac{t}{2}-\rho,
\end{align*}
where  $\rho\in[0,(\frac{n+1}{4}-\frac{t}{2})]$. The upper bound of $\rho$ follows from Claim~1, and the lower bound of $\rho$ follows from the lower bound of $|A|$. 

\noindent{\textsl{Claim~1} :} $|B|\geq\frac{n+1}{2}+t$
\begin{proof}[\tt{Proof of the claim} :]\renewcommand{\qedsymbol}{}
If not, then $|B|<\frac{n+1}{2}+t$. Hence 
\begin{equation*}
|A|=|Q|-|B|>|Q|-\frac{n+1}{2}-t=\frac{3(n+1)}{2}+t-\frac{n+1}{2}-t=(n+1), 
\end{equation*}
i.e. $|A|\geq n+2$. Since $G[A]$ forms a complete graph, it contains a copy of $\mathbb{K}_{2,n}$, which leads to the contradiction.   	
\end{proof}

We construct $C= V(\overline{G})- A=V(G)-A$ and 
\begin{equation*}
A'=\left\{a\in A:|N_{\overline{G}}(a)|\geq\frac{3(n+1)}{2}-1\right\}.    
\end{equation*}

Therefore, $|C|=\frac{9}{4}(n+1)-\frac{t}{2}-\rho+1$, $B\subset C$ and $A'\subset A$.

\begin{figure}[h!]
\centering
\begin{tikzpicture}[every node/.style={font=\small}, >=latex]
\node (Aellipse) at (-3,0)
[draw, very thick, ellipse,fill=blue!15,minimum width=1.25cm, minimum height=3.5cm] {};
\node at (Aellipse.center) [font=\large] {$A$};
\def\Csidelength{3.5cm}
\coordinate (Ccenter) at (2,0);
\coordinate (Csw) at ($(Ccenter)+(-\Csidelength/2,-\Csidelength/2)$); 
\draw[very thick,fill=red!12] (Csw) rectangle ++(\Csidelength,\Csidelength);
\node[below=2cm of Ccenter, yshift=\Csidelength/4, font=\large] (Clabel) {$C$};
\begin{scope}
\coordinate (Bcenter) at ($(Ccenter)+(-1,1)$);
\draw[very thick] 
($(Bcenter)+(-0.45cm,0.0cm)$)
.. controls ($(Bcenter)+(-0.3cm,0.7cm)$) and ($(Bcenter)+(0.6cm,0.6cm)$) ..
($(Bcenter)+(0.8cm,0.1cm)$)
.. controls ($(Bcenter)+(0.6cm,-0.3cm)$) and ($(Bcenter)+(-0.2cm,-0.6cm)$) ..
($(Bcenter)+(-0.6cm,-0.15cm)$)
.. controls ($(Bcenter)+(-0.8cm,0.05cm)$) and ($(Bcenter)+(-0.6cm,0.25cm)$) ..
($(Bcenter)+(-0.45cm,0.0cm)$);
\node at (Bcenter) [font=\large] {$B$};
\end{scope}
\coordinate (vpoint) at (3,0);
\fill (vpoint) circle (3.5pt);
\node[right=3mm of vpoint, font=\large] {$v$};
\end{tikzpicture}
\caption{Visual depiction of decompositions}
\end{figure}
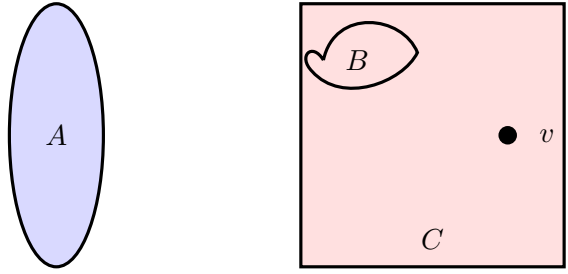 
For each $a\in A'$, we have a decomposition $N_{\overline{G}}(a)=P_{a}\sqcup Q_{a}$ follows from Lemma~\ref{general decomposition lemma}. By our assumption, $\overline{G}[Q_{a}]$ is bipartite for each $a\in A'$. So for each $a,a'\in A'$, we let $A_{a}\sqcup B_{a}$  and $A_{a'}\sqcup B_{a'}$ be the respective bipartition  of the induced subgraph $\overline{G}[Q_{a}]$ and  $\overline{G}[Q_{a'}]$. Without loss of generality, we assume $|A_{a}|\geq\frac{|Q_{a}|}{2}\geq|B_{a}|$ for each $a\in A'$. We  write, for each $a\in A'$,
\begin{align*}
|A_{a}|&= \frac{3(n+1)}{4}-1+k_{a}\approx\frac{|Q_{a}|}{2}+k_{a}\\
|B_{a}|&\geq\frac{3(n+1)}{4}-1-k_{a}\approx\frac{|Q_{a}|}{2}-k_{a},
\end{align*}
where $k_{a}\in [0, \frac{n+1}{4}+1]$. The lower and upper bound of $k_{a}$ holds, since $\frac{|Q_{a}|}{2}\leq|A_{a}|\leq (n+1)$. If possible, let $|B_{a}|<\frac{n+1}{2}-1$, which meansG
\begin{equation*}
|A_{a}|=|Q_{a}|-|B_{a}|>\frac{3(n+1)}{2}-1-\frac{n+1}{2}+1=n+1,  
\end{equation*}
i.e. $|A_{a}|\geq n+2$. Using the assumed bipartite property of $\overline{G}[Q_{a}]$, we have $G[A_{a}]$ forms a complete graph, hence contains a copy of $\mathbb{K}_{2,n}$. This leads to a contradiction. Hence $|B_{a}|\geq\frac{n+1}{2}-1$ and consequently we have 
\begin{align*}
|B_{a}|\geq&\max\left\{\frac{3(n+1)}{4}-1-k_{a},\frac{n+1}{2}-1\right\}.    
\end{align*}

\noindent{\textsl{Claim~2} :} For $a,a'\in A'$, with $a\neq a'$, the following statements hold.
\begin{enumerate}[(a)]
\item If  $|A_{a}\cap B_{a'}|\geq 2$, then $|A_{a}\cup B_{a'}|\leq (n+1)$.
\item If  $|A_{a}\cap A_{a'}|\geq 2$, then $|A_{a}\cup A_{a'}|\leq (n+1)$.
\item If  $|B_{a}\cap B_{a'}|\geq 2$, then $|B_{a}\cup B_{a'}|\leq (n+1)$.
\item If  $|A_{a}\cap B|\geq 2$, then $|A_{a}\cup B|\leq (n+1)$.
\item If  $|B_{a}\cap B|\geq 2$, then $|B_{a}\cup B|\leq (n+1)$.
\end{enumerate}

\begin{proof}[\tt{Proof of the claim} :]\renewcommand{\qedsymbol}{}
Let $|A_{a}\cap B_{a'}|\geq 2$ hold and $x,y\in A_{a}\cap B_{a'}$. If possible, $|A_{a}\cup B_{a'}|\geq (n+2)$ holds, then due to the bipartite property of $\overline{G}[N_{\overline{G}}(a)]$, we get both the vertices  $x$ and $y$  are adjacent to each vertex $u\in (A_{a}\cup B_{a'})- \{x,y\}$ in $G$. This results in a copy of $\mathbb{K}_{2,n}$ in $G$, leading to a contradiction and thus establishing (a). Using analogous arguments, we also obtain (b), (c), (d) and (e).
\end{proof}

\noindent{\textsl{Claim~3} :}  For each $a\in A'$,  exactly one of $A_{a}$ or $B_{a}$ contains $v$. If $v\in S_{a}\in\{A_{a},B_{a}\}$ we call the other by $T_{a}$, i.e $v\notin T_{a}$,  then  we have the following:-
\begin{enumerate}[(a)]
\item $B\cap S_{a}=\emptyset$.
\item $|B\cap T_{a}|\geq\delta(\overline{G}[Q])\geq\frac{n+1}{4}-\frac{3}{2}$.

\item If $S_{a}= B_{a}$ and then for each $n\geq13$,  
\begin{equation*}
|B\cap A_{a} |\geq \max \left\{\frac{n+1}{2}+\frac{t}{2}-1-\rho+k_{a}, \frac{n+1}{4}-\frac{3}{2}\right\}.
\end{equation*}

\item If $S_{a}= A_{a}$, then for each $n\geq13$,  then 
\begin{equation*}
|B\cap B_{a} |\geq \max \left\{\frac{n+1}{2}+\frac{t}{2}-1-\rho-k_{a}, \frac{n+1}{4}-\frac{3}{2}\right\}.
\end{equation*}
\end{enumerate}
\begin{proof}[\tt{Proof of the claim} :]\renewcommand{\qedsymbol}{}
If not, let $x\in B\cap S_{a}$. Then $x$ is adjacent to $v$ in $\overline{G}$; that is,  
$\{x,v\}\subset S_{a}$ and $\{x,v\}\in E(\overline{G})$.  This contradicts the assumption that each edge $\{x,v\}$ cannot lie entirely within one part $S_{a}$ of the assumed induced bipartite subgraph $\overline{G}[N_{\overline{G}}(a)]$.  Thus $B\cap S_{a}=\emptyset$, i.e (a) holds.
	
For the other
\begin{align*}
N_{\overline{G}[Q]}(a)=N_{\overline{G}}(a)\cap Q&=(N_{\overline{G}}(a)\cap A)\sqcup (N_{\overline{G}}(a)\cap B)\\
	    &=\emptyset\sqcup (A_{a}\sqcup B_{a})\cap B\\
		&=(A_{a}\cap B)\sqcup (B_{a}\cap B)=(S_{a}\cap B)\sqcup (T_{a}\cap B)
\end{align*}
Therefore $|B\cap T_{a}|\geq\delta(\overline{G}[Q])$. Finally, $\delta(\overline{G}[Q])\geq\frac{n+1}{4}-\frac{3}{2}$ holds, using Corollary~\ref{decomposition lemma 2}.
	
Since $v\in B_{a}$, using (a) and (b), we have  $|B\cap A_{a}|\geq\frac{n+1}{4}-\frac{3}{2}\geq2$ for each $n\geq13$. By the Construction~\ref{construction}, $n$ is sufficiently large. Using (d) of Claim~2, $|B\cup A_{a}|\leq (n+1)$. Thus we have a revision on the size of $B\cup A_{a}$, namely
\begin{equation*}
|A_{a}|+|B|-|A_{a}\cap B|=|A_{a}\cup B|\leq(n+1),   
\end{equation*}
Using the size $|B|$ and putting $|A_{a}|\geq  \frac{3(n+1)}{4}-1+k_{a}$ in the above inequality, we conclude that (c) holds. The inequality of (d) holds in a similar way.
\end{proof}

Jotting together Claim~2~(d)~and~(e),  since  $v\in S_{a}$, we have for each integer $n\geq13$,  
\begin{equation*}
|B\cap T_{a} |\geq\begin{cases}
		\max \left\{\frac{n+1}{2}+\frac{t}{2}-1-\rho+k_{a}, \frac{n+1}{4}-\frac{3}{2}\right\}&\text{ if } T_{a}=A_{a}\\
		\max \{\frac{n+1}{2}+\frac{t}{2}-1-\rho-k_{a}, \frac{n+1}{4}-\frac{3}{2}\}&\text{ if } T_{a}=B_{a}
	\end{cases}.
\end{equation*}

\noindent{\textsl{Claim~4} :}  For each $a,a'\in A'$ with $a\neq a'$, we have the following:-
\begin{enumerate}[(a)]
\item $|[N_{\overline{G}}(a)\cup N_{\overline{G}}(a')]\cap C|\geq 2(n+1)+1\geq\Delta(\overline{G})+1$.
\item If $|A_{a}\cap A_{a'}|\geq 2$,  then  $|B_{a}\cap B_{a'}|\leq 1$. 
\item If  $|A_{a}\cap B_{a'}|\geq 2$, then  $|B_{a}\cap A_{a'}|\leq 1$.
\end{enumerate}
\begin{proof}[\tt{Proof of the claim} :]\renewcommand{\qedsymbol}{}
If not, let $|(N_{\overline{G}}(a)\cup N_{\overline{G}}(a'))\cap C|\leq 2(n+1)$, for some $a,a'\in A'$ with $a\neq a'$. Using \eqref{nbd complementary dependency}, we have the (structural) identity:-
\begin{equation*}
	C-[(N_{\overline{G}}(a)\cup N_{\overline{G}}(a'))\cap C]=N_{G}(a)\cap N_{G}(a')\cap C.  
\end{equation*}
Thus 
\begin{equation*}
	|N_{G}(a)\cap N_{G}(a')\cap C |=|C-[N_{\overline{G}}(a)\cup N_{\overline{G}}(a')]|=|C|-|(N_{\overline{G}}(a)\cup N_{\overline{G}}(a'))\cap C|.
\end{equation*}
Also recall that, due to the assumed bipartite structure, $|N_{G}(a)\cap N_{G}(a')\cap A|=|A|-2$. Thus 
\begin{align*}
	|N_{G}(a)\cap N_{G}(a')|=&|N_{G}(a)\cap N_{G}(a')\cap A|+|N_{G}(a)\cap N_{G}(a')\cap C|\\
	=&|A|-2+|C|-|(N_{\overline{G}}(a)\cup N_{\overline{G}}(a'))\cap C|\\
	=&|V(G)|-2-|N_{\overline{G}}(a)\cup N_{\overline{G}}(a'))\cap C|\\
	\geq&3(n+1)+1-2-2(n+1)=n,  
\end{align*}
This contradicts \eqref{K_2n freeness dependency}. Thus (a) holds. 

If (b) does not hold, then there exist $a,a'\in A'$ with $a\neq a'$ such that $|A_{a}\cap A_{a'}|\geq 2$ and  $|B_{a}\cap B_{a'}|\geq 2$. In that case, using Claim~2~(b)~and~(c), we have $|(A_{a}\cup A_{a'})|\leq n+1$ and $|(B_{a}\cup B_{a'})|\leq n+1$. Thus, 
\begin{equation*}
    |(A_{a}\cup A_{a'}\cup B_{a}\cup B_{a'})|\leq |(A_{a}\cup A_{a'})|+|(B_{a}\cup B_{a'})|\leq 2(n+1)
\end{equation*}
contradicting Claim~4~(a). Hence (b) holds, and similarly, (c) also holds.
\end{proof}

\noindent{\textsl{Claim~5} :} For each integer $n\geq54$, there does not exist a $4-$set $U$, where  $U\subset A'$, such that $|T_{a}\cap T_{a'}|\leq1$ for each $a,a'\in U$ with $a\neq a'$.  
\begin{proof}[\tt{Proof of the claim} :]\renewcommand{\qedsymbol}{}
For an integer $n\geq54$, if possible there exists a $4-$set $U\subset A'$ such that $|T_{a}\cap T_{a'}|\leq1$ for each $a,a'\in U$ with $a\neq a'$. This implies $\left|\left(\underset{a\in R}{\cap}T_{a}\right)\cap B\right|\leq1$ for each $R\subset U$ with $|R|\in\{2,3,4\}$.

We decompose the set $U=\{p\}\sqcup(U-\{p\})$ for some $p\in U$ and using the inclusion and exclusion principle,
\begin{align*}
|B|\geq|\underset{a\in U}{\cup}(T_{a}\cap B)|=&\underset{a\in U}{\sum}|T_{a}\cap B|+\underset{\substack{R\subset U\\|R|\geq2}}{\sum}(-1)^{|R|+1}\left|\left(\underset{a\in R}{\cap}T_{a}\right)\cap B\right|\\ 
&\geq\underset{a\in U}{\sum}|T_{a}\cap B|-\binom{4}{2}-\binom{4}{4}>|B|,
\end{align*}
which leads to the contradiction. To see the estimation, we use the formula 
\begin{equation*}
\max\{a,b\}+\max\{c,d\}=\max\{a+c,b+c,a+d,b+d\}\geq a+c    
\end{equation*}
for each $a,b,c,d\in\mathbb{R}$. Since the integer $n\geq54$, we have
\begin{align*}
|T_{p}\cap B|+\underset{a\in U-\{p\}}{\sum}|T_{a}\cap B|\geq&\begin{cases}
		\max \left\{\frac{n+1}{2}+\frac{t}{2}-1-\rho+k_{p}, \frac{n+1}{4}-\frac{3}{2}\right\}&\text{ if } T_{p}=A_{p}\\
		\max \left\{\frac{n+1}{2}+\frac{t}{2}-1-\rho-k_{p}, \frac{n+1}{4}-\frac{3}{2}\right\}&\text{ if } T_{p}=B_{p}
 \end{cases}+\\
 &\hspace{1cm}+\underset{a\in U-\{p\}}{\sum}\begin{cases}
		\max \left\{\frac{n+1}{2}+\frac{t}{2}-1-\rho+k_{a}, \frac{n+1}{4}-\frac{3}{2}\right\}&\text{ if } T_{a}=A_{a}\\
		\max \left\{\frac{n+1}{2}+\frac{t}{2}-1-\rho-k_{a}, \frac{n+1}{4}-\frac{3}{2}\right\}&\text{ if } T_{a}=B_{a}
\end{cases}\\
 \geq&\frac{n+1}{2}+\frac{t}{2}-1-\rho-k_{p}+\underset{a\in U-\{p\}}{\sum}\frac{n+1}{4}-\frac{3}{2}\\
 &=|B|-k_{p}+\frac{n+1}{2}-\frac{11}{2}\\
 &\geq|B|+\frac{n+1}{2}-\left(\frac{n+1}{4}+1\right)-\frac{11}{2}\\
 &\geq|B|+\frac{n+1}{4}-\frac{13}{2}=|B|+\binom{4}{2}+\binom{4}{4}+\frac{n-53}{4}\\
 &>|B|+\binom{4}{2}+\binom{4}{4}.
\end{align*}
Hence, the claim is established.
\end{proof}

\noindent{\textsl{Claim~6} :} For each integer $n\geq19$, there does not exist a $4-$set $U\subset A'$ such that $|T_{a}\cap T_{a'}|\geq2$ for each $a,a'\in U$.
\begin{proof}[\tt{Proof of the claim} :]\renewcommand{\qedsymbol}{}
 Let there exist such a set $U= \{p,q,r,s\}$. Since $|T_{a}\cap T_{a'}|\geq2$, using Claim~4~(b)~and~(c), we conclude that  $|S_{a}\cap S_{a'}|\leq 1$, for each $a,a'\in\{p,q,r,s\}$ with $a\neq a'$. Since in this case $v\in S_{a}\cap S_{a'}$, for each $a,a',a''\in\{p,q,r,s\}$, we conclude 
\begin{equation*}
S_{a}\cap S_{a'}\cap S_{a''}=\{v\}=S_{a}\cap S_{a'}=S_{p}\cap S_{q}\cap S_{r}\cap S_{s} 
\end{equation*}
Finally, using the inclusion and exclusion principle, we have 
\begin{align*}
|C|\geq|(S_{p}\cup S_{q}\cup S_{r}\cup S_{s})\sqcup B|&=|S_{p}|+|S_{q}|+|S_{r}|+|S_{s}|+\underset{\substack{R\subset\{p,q,r,s\}\\|R|\geq2}}{\sum}(-1)^{|R|+1}+|B|\\
&=|S_{p}|+|S_{q}|+|S_{r}|+|S_{s}|-4+1+|B|\\
&\geq |B_{p}|+|B_{q}|+|B_{r}|+|B_{s}|-3+|B|\\
&\geq |B|-3+|B_{p}|+\underset{a\in\{q,r,s\}}{\sum}\left(\frac{n+1}{2}-\frac{3}{2}\right)\\
&\geq\frac{3(n+1)}{4}+\frac{t}{2}-\rho-3+\frac{3(n+1)}{4}-\frac{1}{2}-k_{p}+\frac{3(n+1)}{2}-\frac{9}{2}\\
&=|C|+t-k_{p}+\frac{3(n+1)}{4}-8\\
&\geq|C|-8+\frac{3(n+1)}{4}+t-\left(\frac{n+1}{4}+1\right)\\
&\geq|C|-9+\frac{3(n+1)}{4}-\frac{n+1}{4}-\frac{1}{2}\\
&=|C|+\frac{n-18}{2}.
\end{align*}	
Thus, for each integer $n\geq19$ we have $|C|>|C|$, which yields a contradiction. This establishes the claim.
\end{proof}	

\noindent{\textsl{Claim~7} :} For each integer $n\geq54$, $|A'|\leq 17$.
\begin{proof}[\tt{Proof of the claim} :]\renewcommand{\qedsymbol}{}
If not, then let $|A'|\geq 18$. We consider the complete graph $G'$ with the vertex set $A'$. Then we have $|V(G')|=|A'|\geq 18$. For each two distinct vertices $a, a'\in V(G')$, we colour the edge $\{a,a'\}$ red if $|T_{a}\cap T_{a'}|\leq1$ and blue if $|T_{a}\cap T_{a'}|\geq2$. Since the Ramsey number $R(4,4)=18$, with respect to this edge-colouring, we get a monochromatic copy of $\mathbb{K}_{4}$ embedded in the complete graph $G'$. That is, there exists a set of four vertices $U\subset A'$ such that either $|T_{a}\cap T_{a'}|\leq1$ for each $a,a'\in U$ with $a\neq a'$ or $|T_{a}\cap T_{a'}|\geq2$ for each $a,a'\in U$ with $a\neq a'$. However, in both cases, we obtain a contradiction from Claim~5 and Claim~6. This establishes the claim.
\end{proof}
 Using Claim~7, we have a decomposition of $A=A'\sqcup A''$, where $A'$ is the same as described before and $A''=A-A'$. Since $|A'|\leq17$, $|A''|\geq |A|-17\geq \frac{3(n+1)}{4}+\frac{t}{2}+\rho-17$. We decompose the vertex set 
\begin{equation*}
V(G)=\{v\}\sqcup A'\sqcup A''\sqcup C''.     
\end{equation*}
Note that $C''=C-\{v\}$ and hence $|C''|=|C|-1= \frac{9}{4}(n+1)-\frac{t}{2}-\rho$. We construct two induced subgraphs $\overline{G''}=\overline{G}[V(G)-(\{v\}\sqcup A')]$ and $G''=G[V(G)-(\{v\}\sqcup A')]$. Note that $\overline{G''}$ forms a complementary subgraph of $G''$.  We consider the bipartition $A''\sqcup C''$ of 
\begin{equation*}
    V(G'')=V(\overline{G''})=V(G)-(\{v\}\sqcup A').
\end{equation*}
Since $v$ is adjacent to each vertex of $A$ in $\overline{G}$, we have for each $w\in A''$, $v\in N_{\overline{G}}(w)$, consequently $|N_{\overline{G''}}(w)|<\frac{3(n+1)}{2}-2$. To prevent the loss of information, we take $|N_{\overline{G''}}(w)|+\frac{1}{2}\leq\frac{3(n+1)}{2}-2$, i.e. $|N_{\overline{G''}}(w)|\leq\frac{3(n+1)}{2}-\frac{5}{2}$. And thus for each $w\in A''$, using \eqref{nbd complementary dependency},
\begin{align*}
|N_{G''}(w)|=&|V(G'')|-1-|N_{\overline{G''}}(w)|\\
 =&|V(G)|-1-|A'|-|N_{\overline{G''}}(w)|\\
 \geq&3(n+1)-|A'|-\frac{3(n+1)}{2}+\frac{5}{2}=\frac{5}{2}+\frac{3(n+1)}{2}-|A'|.
\end{align*}
Since $w$ is adjacent with each of the vertices of $A''$ in $G$, $|N_{G''}(w)\cap A''|=|A''|-1$. Thus we have
\begin{align*}
 |N_{G''}(w)\cap C''|=|N_{G''}(w)|- (|A''|-1)&\geq \frac{5}{2}+\frac{3(n+1)}{2}-|A'|-(|A''|-1)\\
 &=\frac{7}{2}+\frac{3(n+1)}{2}-|A|\\
 &=\frac{7}{2}+\frac{3(n+1)}{4}-\frac{t}{2}-\rho.   
\end{align*}
Using Lemma~\ref{intersection lemma}, for the vertex set bipartition $A''\sqcup C''$ with $d=\frac{7}{2}+\frac{3(n+1)}{4}-\frac{t}{2}-\rho$ for the graph $G''$, we have two distinct vertices $a,b\in A''$ such that 
\begin{align*}
|N_{G''}(a)\cap N_{G''}(b)\cap C''|>\frac{d^{2}}{|C''|}-\frac{d}{|A''|}\geq&\frac{[\frac{3(n+1)}{4}-\frac{t}{2}-\rho+\frac{7}{2}]^{2}}{\frac{9(n+1)}{4}-\frac{t}{2}-\rho}-\frac{\frac{3(n+1)}{4}-\frac{t}{2}-\rho+\frac{7}{2}}{\frac{3(n+1)}{4}+\frac{t}{2}+\rho-17}\\
 &=\frac{1}{4}\frac{[3(n+1)-2t-4\rho+14]^{2}}{9(n+1)-2t-4\rho}-\frac{3(n+1)-2t-4\rho+14}{3(n+1)+2t+4\rho-68}\\
 &=(n+1)-|A|+\frac{P(n,t,\rho)}{Q(n,t,\rho)}\\
 &=\bigg(\frac{n+1}{4}-\frac{t}{2}-\rho\bigg)+\frac{P(n,t,\rho)}{Q(n,t,\rho)},
\end{align*}
where
\begin{align*}
\frac{P(n,t,\rho)}{Q(n,t,\rho)}&=\frac{1}{4}\frac{[3(n+1)-(2t+4\rho)+14]^{2}}{9(n+1)-(2t+4\rho)}-\frac{(n+1)-(2t+4\rho)}{4}-\frac{3(n+1)-(2t+4\rho)+14}{3(n+1)+(2t+4\rho)-68}\\
&=\frac{(21+2t+4\rho)(n+1)-7(2t+4\rho)+49}{9(n+1)-(2t+4\rho)}-\frac{3(n+1)-(2t+4\rho)+14}{3(n+1)+(2t+4\rho)-68}.
\end{align*}

\noindent{\textsl{Claim~8} :} For each integer $n\geq139$, $\frac{P(n,t,\rho)}{Q(n,t,\rho)}>0$ holds.
\begin{proof}[\tt{Proof of the claim} :]\renewcommand{\qedsymbol}{}
Further computation shows 
\begin{align*}
 P(n,t,\rho)&=(36+6t+12\rho)(n+1)^{2}+[(2t+4\rho)^{2}-56(2t+4\rho)-1407](n+1)+\\
 &\hspace{2cm}+539(2t+4\rho)-8(2t+4\rho)^{2}-3332\\
Q(n,t,\rho)&=[9(n+1)-(2t+4\rho)][3(n+1)-68+(2t+4\rho)].
\end{align*}
Here we estimate both $ P(n,t,\rho)$ and $Q(n,t,\rho)$, using $2t+4\rho\in[2t,n+1)$. Note that
\begin{align*}
 P(n,t,\rho)&>24(n+1)^{2}-8(2t+4\rho)^{2}-2191(n+1)-4\cdot 539-3332>16(n+1)^{2}-2191(n+1)-5488,\\
 Q(n,t,\rho)&<9(n+1)[3(n+1)+(2t+4\rho)]<36(n+1)^{2}.
\end{align*}
Thus, for each integer $n\geq139$,
\begin{equation*}
 \frac{P(n,t,\rho)}{Q(n,t,\rho)}>\frac{16(n+1)^{2}-2191(n+1)-5488}{36(n+1)^{2}}>0,  
\end{equation*}
which establishes the claim.
\end{proof}
Here $C''\subset C$ and $G''$ forms a subgraph of $G$ and $\overline{G}[Q]$ is assumed to form a bipartite graph, with bipartition $Q=A\sqcup B$. These imply
\begin{align*}
 N_{G''}(a)\cap N_{G''}(b)\cap C''&\subset N_{G}(a)\cap N_{G}(b)\cap C,\\
 N_{G[Q]}(a)\cap N_{G[Q]}(b)\cap A&=A-\{a,b\}=N_{G}(a)\cap N_{G}(b)\cap A. 
\end{align*}
Thus for two distinct vertices $a,b\in A''$,
\begin{align*}
|N_{G}(a)\cap N_{G}(b)|=&|N_{G}(a)\cap N_{G}(b)\cap A|+|N_{G}(a)\cap N_{G}(b)\cap C|\\
&\geq |N_{G[Q]}(a)\cap N_{G[Q]}(b)\cap A|+|N_{G''}(a)\cap N_{G''}(b)\cap C''|\\
&\geq (|A|-2)+(n+1)-|A|+1=n.
\end{align*}
This contradicts \eqref{K_2n freeness dependency}.
\end{proof}

\begin{remark}
In the proof of Lemma~\ref{nbd conservation 2}, considerable effort is devoted to establish $|A'|\leq17$, which is subsequently used in the estimation of the term $\frac{d}{|A''|}=\frac{d}{|A|-|A'|}\leq\frac{d}{|A|-17}$ involves in the inequality for $\Ex[X]$.  The set $A'$ serves as a \emph{null} set in the sense discussed in the introduction.
\end{remark}

We further invoke a connectivity argument, since one of the sufficient conditions of Lemma~\ref{cycle lemma 1} requires us to show that $\kappa(\overline{G}[Q_{a}])\geq2$, for some $a\in V(\overline{G})$ with 
\begin{equation*}
 |N_{\overline{G}}(a)|\in\left[\frac{3(n+1)}{2}-1, 2(n+1)\right].   
\end{equation*}
Thereafter, showing that it contains a copy of an odd cycle $C_{m}$. Consequently, we complete the proof by obtaining a copy of a wheel $W_{m}$ in $\overline{G}$ with $a$ as a hub. We first establish the following lemma. 

\begin{lemma}\label{connectivity lemma 3}
For each $a\in V(\overline{G})$ with $|N_{\overline{G}}(a)|\in\left[\frac{3(n+1)}{2}-1, 2(n+1)\right]$ and with the decomposition $P_{a}\sqcup Q_{a}=N_{\overline{G}}(a)$, the connectivity parameter of $\overline{G}[Q_{a}]$ is at least $2$, i.e. $\kappa(\overline{G}[Q_{a}])\geq2$.
\end{lemma}
\begin{proof}
If not, let $\kappa(\overline{G}[Q_{a}])\leq 1$. Since $\delta(\overline{G}[Q_{a}])\geq \frac{|N_{\overline{G}}(a)|}{6}-\frac{4}{3}\geq\frac{|Q_{a}|}{6}-\frac{4}{3}\geq \frac{|Q_{a}|}{7}+7$ for sufficiently large $n$, using Lemma~\ref{Star-cycle} the vertex set $Q_{a}$ admits a decomposition  
\begin{equation*}
 Q_{a}= R\sqcup S_{1}\sqcup\ldots\sqcup S_{r},   
\end{equation*}
where $|R|\leq r-1$, for some $r<7$ and $\kappa(\overline{G}[S])\geq 2$ for each $S\in\{S_{1},\ldots,S_{r}\}$. Due to the neighbourhood relation \eqref{nbd dependency}, for each $x\in S$ , where $S\in\{S_{1},\ldots,S_{r}\}$
\begin{equation*} 
N_{\overline{G}[S]}(x)\subset N_{\overline{G}[Q_{a}]}(x)\subset N_{\overline{G}[S]}(x)\sqcup R.    
\end{equation*}
Since each $S\in\{S_{1},\ldots,S_{r}\}$ forms a connected component, thus we have 
\begin{equation*}
 \frac{|Q_{a}|}{7}+7-5\leq\delta(\overline{G}[Q_{a}])-|R|\leq\delta(\overline{G}[S])\leq|S|-1.   
\end{equation*}
This means $|S|\geq3+\frac{|Q|}{7}$ for each $S\in\{S_{1},\ldots,S_{r}\}$.

We choose $B\in\{S_{1},\ldots,S_{r}\}$ such that $|B|=\min\{|S_{1}|,\ldots,|S_{r}|\}$ and $A=(S_{1}\sqcup\ldots\sqcup S_{r})-B$. 

\noindent{\textsl{Claim~1} :} $|A|\in\left[\frac{|V(\overline{H})|}{2}-3,n-1\right]\cap\mathbb{Z}=\left[\frac{3(n+1)}{4}-\frac{7}{2},n-1\right]\cap\mathbb{Z}$.  
\begin{proof}[\tt{Proof of the claim} :]\renewcommand{\qedsymbol}{}
The proof is analogous to that used for the claim in Lemma~\ref{connectivity-first range}, with the modification that we require here is $|R|\leq 5$.
\end{proof}

\noindent So let $|A|=\frac{3}{2}(n+1)-\frac{7}{2}+p$ for some $p\in[0,\frac{n+1}{4}+\frac{3}{2}]$. Then $|B|\geq \frac{3(n+1)}{4}-\frac{7}{2}-p$ as $B= Q_{a}-(A\sqcup R)$, $|Q_{a}|\geq \frac{3}{2}(n+1)-2$ and $|R|\leq 5$. Here we make the following claim.
  
\noindent{\textsl{Claim~2} :} $\overline{G}[B]$ is non-bipartite.
\begin{proof}[\tt{Proof of the claim} :]\renewcommand{\qedsymbol}{}
If possible, then let $\overline{G}[B]$ be bipartite with partition $B_{1}\sqcup B_{2}$ and $|B_{1}|\geq |B_{2}|$. This implies $|B_{1}|\geq \frac{|B|}{2}\geq \frac{3(n+1)}{8}-\frac{7}{4}-\frac{p}{2}$. Observe that any two vertices $x,y \in B_{1}$ are adjacent to all of $(B_{1}\sqcup A)-\{x,y\}$ in $G$. Consequently, 
\begin{equation*}
|N_{G}(x)\cap N_{G}(y)|\geq  |B_{1}|+|A|-2 \geq \frac{9(n+1)}{8}-\frac{31}{4}+\frac{p}{2}\geq\frac{(9n-53)}{8}>n
\end{equation*}
for each $n\geq54$, which contradicts \eqref{K_2n freeness dependency}. This establishes the claim.
\end{proof}

Recall that, in the above decomposition of $Q_{a}$, each $|S|\geq 2$ for each $S\in\{S_{1},\ldots,S_{r}\}$  and consequently $|A|\geq 2$. Also $|B|\leq n-1$, otherwise each pair of distinct vertices in $A$ form a copy of $\mathbb{K}_{2,n}$ in $G$ with the neighbours in $B$. We also have 
\begin{equation*}
    \delta(\overline{G}[B])\geq \delta(\overline{G}[Q_{a}])-|R|\geq \frac{n+1}{4}-\frac{3}{2}-5\geq \frac{n-1}{5}\geq \frac{|B|}{5} 
\end{equation*}
for each $n\geq 121$. Since $\kappa(\overline{G}[B])\geq 2$, we can further apply Lemma~\ref{cycle lemma 1} and Lemma~\ref{cycle lemma 2} on it. Lemma~\ref{cycle lemma 1} ensures that $\mathop{oc}(\overline{G}[B])\geq 2\delta(\overline{G}[B])-1\geq \frac{n+1}{2}-14$. By plugging in $\varepsilon =\frac{1}{5}$ in Lemma~\ref{cycle lemma 2} and using a similar argument as previously, we conclude that $B$ contains a copy of an odd cycle $C_{m}$. 

Recall that, $ K(\frac{1}{10})>K(\frac{1}{5})$. Therefore, 
\begin{equation*}
 m\in \left[K\left(\frac{1}{10}\right),\left(\frac{3-\sqrt{3}}{5}\right)(n+1)-K\left(\frac{1}{10}\right)\right]\subset \left[K\left(\frac{1}{5}\right),\frac{(n+1)}{2}-14-K\left(\frac{1}{5}\right)\right].
\end{equation*}
Hence, the choice of $n$ and $m$ in Construction~\ref{construction} is sufficient here to guarantee the existence of a copy of an odd cycle  $C_{m}$. This odd cycle, together with $v$, forms a copy of a wheel $W_{m}$, which is a contradiction. Therefore, $\kappa(\overline{G}[Q_{a}])\geq 2$.
\end{proof}

The above lemma implies that $\kappa(\overline{G}[Q])\geq 2$ since $|V(\overline{H})|\in\left[\frac{3(n+1)}{2}-1, 2(n+1)\right]$. Finally, it appears to resolve two cases, where the induced graph $\overline{G}[Q]$ forms a bipartite graph or a non-bipartite graph. If the induced subgraph $\overline{G}[Q]$ forms a non-bipartite graph, then Lemma~\ref{connectivity lemma 3} ensures that Lemma~\ref{cycle lemma 1} and Lemma~\ref{cycle lemma 2} apply to the graph $\overline{G}[Q]$. Furthermore,  Corollary~\ref{decomposition lemma 2} implies that $\delta(\overline{G}[Q])\geq \frac{|Q|}{7}$, for sufficiently large integer  $n$. Since $K(\frac{1}{10})>K(\frac{1}{7})$, we conclude in a similar way that $\overline{G}[Q]$ contains a copy of an odd cycle $C_{m}$. This cycle, together with the vertex $v$, forms a copy of a wheel $W_{m}$ in $\overline{G}$, which is a contradiction. This contradicts our assumption that $\overline{G}$ is $W_{m}-$free.

Otherwise, if the induced subgraph $\overline{G}[Q]$ forms a bipartite graph, then using Lemma~\ref{nbd conservation 2}, there exists at least one vertex $x\in Q'$ such that the induced subgraph $\overline{G}[Q_{x}]$ forms a non-bipartite graph. Since 
\begin{equation*}
 |N_{\overline{G}}(x)|\in\left[\frac{3(n+1)}{2}-1,2(n+1)\right],   
\end{equation*}
it mirrors the structural behaviour of  $V(\overline{H})$ in this context. Consequently, we substitute $v$ and $V(\overline{H})$ with $x$ and $N_{\overline{G}}(x)$. By applying a connectivity argument analogous to that used in Lemma~\ref{connectivity lemma 3}, we conclude that $N_{\overline{G}}(x)$ contains a copy of an odd cycle $C_{m}$ within $\overline{G}$. This cycle, together with the vertex $x$, forms a copy of a wheel $W_{m}$ in $\overline{G}$, which is a contradiction. This contradicts our assumption that $\overline{G}$ is $W_{m}-$free.

This concludes the proof of Theorem~\ref{main theorem}.

\begin{acknowledgement}
This research work has no associated data. The authors thank Jie Wang for detecting a typographical error in an earlier version of this draft.     
\end{acknowledgement}

\bibliographystyle{siam}

\end{document}